\newcommand{\nwc}{\newcommand}
\nwc{\aaa}{\mathcal{F}}
\nwc{\aap}{\mathcal{F}_{P}}
\nwc{\al}{\alpha}
\nwc{\C}{\mathbb{C}}
\nwc{\cb}{\overline{C}}
\nwc{\ccc}{\mathfrak{c}}
\nwc{\ch}{\widehat{C}}
\nwc{\cin}{\textbf{(v)}}
\nwc{\cl}{C'}
\nwc{\cp}{\mathcal{C}_{P}}
\nwc{\cpll}{\mathfrak{c}_{P'}}
\nwc{\ct}{\widetilde{C}}
\nwc{\dd}{\mathcal{L}}
\nwc{\ddd}{\mathfrak{d}}
\nwc{\ddl}{\mathcal{L}'}
\nwc{\dlp}{\delta_{P}}
\nwc{\doi}{\textbf{(ii)}}
\nwc{\enq}{$$}
\nwc{\fl}{\flushleft}
\nwc{\fff}{\mathcal{F}}
\nwc{\ffp}{\mathcal{F}_{P}}
\nwc{\ffq}{\mathcal{F}_{Q}}
\nwc{\ffl}{\mathcal{F}'}
\nwc{\G}{\mathcal{G}}
\nwc{\Ga}{\Gamma}
\nwc{\gtl}{\widetilde{g}}
\nwc{\hra}{\hookrightarrow}
\nwc{\hua}{h^{1}(C,\aaa )}
\nwc{\kk}{{\rm K}}
\nwc{\llb}{\mathcal{L}}
\nwc{\mb}{\mathbb}
\nwc{\mc}{\mathcal}
\nwc{\mm}{\mathfrak{m}}
\nwc{\mmp}{\mathfrak{m}_{P}}
\nwc{\mpd}{\mathfrak{m}_{P}^{2}}
\nwc{\nn}{\mathbb{N}}
\nwc{\ob}{\overline{\mathcal{O}}}
\nwc{\obr}{\mathcal{O}^*}
\nwc{\obp}{\overline{\mathcal{O}}_P}
\nwc{\och}{\mathcal{O}_{\hat{C}}}
\nwc{\oh}{\hat{\mathcal{O}}}
\nwc{\ohp}{\hat{\mathcal{O}}_{P}}
\nwc{\ol}{\mathcal{O}'}
\nwc{\oma}{\Omega (\mathfrak{a})}
\nwc{\omo}{\Omega (\mathcal{O})}
\nwc{\oo}{\mathcal{O}}
\nwc{\op}{\mathcal{O}_P}
\nwc{\opc}{\mathcal{O}_{P,C}}
\nwc{\oph}{\hat{\mathcal{O}}_{P}}
\nwc{\opl}{\mathcal{O}_{P}'}
\nwc{\oplc}{\mathcal{O}_{P,C}'}
\nwc{\opll}{\mathcal{O}_{P'}}
\nwc{\opt}{\tilde{\mathcal{O}}_{P}}
\nwc{\optt}{{\mathcal{O}}_{\tilde{P}}}
\nwc{\oq}{\mathcal{O}_{Q}}
\nwc{\oqt}{\tilde{\mathcal{O}}_{Q}}
\nwc{\ot}{\widetilde{\mathcal{O}}}
\nwc{\overop}{\bar{\oo}_{P}}
\nwc{\pb}{\overline{P}}
\nwc{\pbb}{P^*}
\nwc{\pbi}{\overline{P_{i}}}
\nwc{\pbr}{\overline{P_{r}}}
\nwc{\pgmd}{\mathbb{P}^{g+2}}
\nwc{\pgmu}{\mathbb{P}^{g+1}}
\nwc{\ph}{\hat{P}}
\nwc{\pp}{\mathbb{P}}
\nwc{\prv}{\noindent\textbook{Proof}:}
\nwc{\pt}{\widetilde{P}}
\nwc{\ptl}{\tilde{P}}
\nwc{\pum}{\mathbb{P}^{1}}
\nwc{\qh}{\hat{Q}}
\nwc{\qtl}{\tilde{Q}}
\nwc{\qua}{\textbf{(iv)}}
\nwc{\ra}{\rightarrow}
\nwc{\rh}{\hat{R}}
\nwc{\sei}{\textbf{(vi)}}
\nwc{\sep}{\beq\ast\ \ast\ \ast\enq}
\nwc{\sig}{\sigma}
\nwc{\Sig}{\Sigma}
\nwc{\ssp}{S_{P}}
\nwc{\sss}{{\rm S}}
\nwc{\tre}{\textbf{(iii)}}
\nwc{\um}{\textbf{(i)}}
\nwc{\vpb}{v_{\overline{P}}}
\nwc{\vtxp}{\widetilde{V}_{x,P}}
\nwc{\vxp}{V_{x,P}}
\nwc{\wh}{\hat{\omega}}
\nwc{\whp}{\hat{\omega}_{P}}
\nwc{\woch}{\omega\cdot\mathcal{O}_{\hat{C}}}
\nwc{\woh}{\omega\cdot\hat{\mathcal{O}}}
\nwc{\ww}{\omega}
\nwc{\wwb}{\omega^*}
\nwc{\wwct}{\omega _{\widetilde{C}}}
\nwc{\wwh}{\widehat{\omega}}
\nwc{\wwhp}{\widehat{\omega}_P}
\nwc{\wwp}{\omega _{P}}
\nwc{\wwt}{\widetilde{\omega}}
\nwc{\wwtp}{\widetilde{\omega}_P}
\nwc{\zz}{\mathbb{Z}}
\newtheorem{coro}{Corollary}[section]
\newtheorem{conv}[coro]{Convention}
\newtheorem{lemma}[coro]{Lemma}
\newtheorem{prop}[coro]{Proposition}
\newtheorem{rem}[coro]{Remark}
\newtheorem{rems}[coro]{Remarks}
\newtheorem{thm}[coro]{Theorem}
\newtheorem{conj}[coro]{Conjecture}
\let \fl=\flushleft
\let \ga=\gamma
\let \sub=\subset
\let \al=\alpha
\let \pr=\prime
\let \la=\lambda
\let \ka=\kappa
\begin{document}

\title{Singular rational curves with points of nearly-maximal weight}

\author{Ethan Cotterill}
\address{Instituto de Matem\'atica, UFF
Rua M\'ario Santos Braga, S/N,
24020-140 Niter\'oi RJ, Brazil}
\email{cotterill.ethan@gmail.com}

\author{Lia Feital}
\address{Departamento de Matem\'atica, CCE, UFV
Av. P H Rolfs s/n, 36570-000 Vi\c{c}osa MG, Brazil}
\email{liafeital@ufv.br}

\author{Renato Vidal Martins}
\address{Departamento de Matem\'atica, ICEx, UFMG
Av. Ant\^onio Carlos 6627,
30123-970 Belo Horizonte MG, Brazil}
\email{renato@mat.ufmg.br}

\subjclass{Primary 14H20, 14H45, 14H51, 20Mxx}

\keywords{linear series, rational curves, singular curves, semigroups}

%\begin{abstract}
%\end{abstract}

\maketitle

\begin{abstract}
In this article we study rational curves with a unique unibranch genus-$g$ singularity, which is of {\it $\ka$-hyperelliptic} type in the sense of \cite{To}; we focus on the cases $\ka=0$ and $\ka=1$, in which the semigroup associated to the singularity is of (sub)maximal weight. We obtain a partial classification of these curves according to the linear series they support, the scrolls on which they lie, and their gonality. 
\end{abstract}

\section*{Introduction}

Rational curves have long played an essential r\^ole in the classification of complex algebraic varieties. Even when the target variety is $\mb{P}^n$, the problem of classifying {\it singular} rational curves is surprisingly subtle, and most works to date have concentrated on the case $n=2$; see for example \cite{deB,FZ,KoP,Mo,O,Pi,Ton}. In this paper we explore the classification problem for rational curves with a unibranch singularity $P$ (and which are smooth away from $P$). 

\medskip
Semigroups of unibranch singularities naturally form a tree, whose vertices are indexed by their minimal generating sets. The asymptotic structure of the tree's infinite leaves is essentially prescribed by the {\it weights} of the underlying semigroups. This leads us naturally to a reconsideration of how these weights should be specified in the first place. In this line of questioning, we are guided by an obvious analogy between semigroups of singular points and semigroups of Weierstrass points of linear series on smooth curves.

%Actually, defining weight of points in terms of semigroups is something which became very common in the literature. But, from a historic perspective, there is a certain change of roles in this practice. For instance, in \cite{Ka}, Kato defines the weight of a point by means of (Rosenlicht's) differentials of first kind with any mention to semigroups. His definition inspired us to weight singular points  in a way that agrees with the standard in the smooth case, and, also, is useful for our purposes.  We then note that the weight of a singular point here corresponds to the weight of a set defined by its semigroup. Fixed this notion, we characterize points of maximal and submaximal weight in Theorems~\ref{thmmwg} and \ref{biell}.

\medskip
For a smooth curve, the Weierstrass semigroup is given either by pole orders of meromorphic functions or of vanishing orders of regular differentials, and via Serre duality each formulation is equivalent. For a singular curve, however, using pole orders or differential orders of vanishing produces two distinct notions of weight. The main premise of Section 1 of this paper is that using a notion of weight based on differential orders of vanishing, as in \cite{Ka} (who, however, makes no mention of semigroups), confers certain advantages.

\medskip
Torres \cite{To} has given an asymptotic classification of numerical semigroups according to the notion of weight coming from meromorphic functions. Our Theorem \ref{biell} (whose proof strongly uses Torres' results) gives a slightly more precise characterization, using a notion of weight based on differentials, for {\it bielliptic} singularities: these are shown to be precisely those nonhyperelliptic singularities of maximal weight. It is natural to speculate that Torres' general classification result for {\it $\ka$-hyperelliptic} singularities may be refined using our alternative version of weight. We make this quantitatively precise in Conjecture~\ref{W_K_conj}, and give some evidence for the conjecture in Theorem~\ref{kappa_hyperelliptic_weight}.

\medskip
%A second problem is to describe our rational curves according to the weights of their corresponding singular points. We will do it just for curves admitting points of maximal and submaximal weight. In order to do that, we develop an independent, and general set of results in Section 2. 
The analogy with Weierstrass points also leads one naturally to wonder how the stratification of singular rational curves according to {\it gonality} interacts with the stratification according to value semigroups (indeed, one of the primary motivations behind Torres' theory was to clarify that causal relationship for smooth curves); this is the focus of Section 2. In Theorem~\ref{gonality_bound} we give a simple upper bound for the gonality of an integral projective curve as a function of the arithmetic genus, and characterize when the bound is sharp. 

\medskip
The study of (gonality) of linear series on irreducible singular curves is more delicate than the classical theory for smooth curves. It is both useful and necessary from our point of view to authorize {\it non-removable} base points, in the sense of \cite{Cp}. For example, it is well-known that any trigonal smooth curve lies on a surface scroll. St\"ohr and Rosa showed in \cite{SR} that the assertion remains true for {\it Gorenstein} curves, with the caveats that the scroll might be a cone; and that the $g^1_3$ might have non-removable base points. %Earlier, Schreyer \cite{Sc} related the $d$-gonality of a smooth curve to the embeddability of that curve in a $(d-1)$-fold scroll, whenever $d=4$ or $d=5$. 
More generally, the fact that any smooth $d$-gonal curve embeds in a $(d-1)$-fold scroll is due to Bertini; see, e.g. \cite[Thm. 2.5]{Sc}. 
Theorem~\ref{lemgfb} establishes that rational curves with a single unibranch singularity lie on scrolls of a certain (co)dimension that is computable from their parametrizations. When the singularity is {\it bielliptic}, we also obtain (sufficient) conditions for the curve to admit pencils with a non-removable base point.  We also give an intrinsic characterization (in terms of $k$) of those rational curves that carry base-point-free $g^1_k$'s in Lemma~\ref{lemgk1}.

%One can also find a slightly different approach in \cite[Thm. 2.1]{SR} based on Andreotti-Mayer's \cite{AM}. 

\medskip
Finally, in Section 3, we study rational curves with singularities of maximal and submaximal weight. Theorem \ref{thmhyp} characterizes hyperelliptic (singular) curves. It should be compared against the well-known characterization of hyperelliptic smooth curves as those for which 2 belongs to the Weierstrass semigroup in a point. Within our category of singular rational curves (with a single unibranch singularity) the analogous characterization fails: hyperelliptic singular curves have hyperelliptic singularities, i.e. singularities for which 2 belongs to the corresponding numerical semigroup, but not vice versa. Accordingly, we attempt to characterize those curves with hyperelliptic singularities that are not (globally) hyperelliptic, i.e. that admit a degree-2 morphism to $\mb{P}^1$. Proposition~\ref{hypgenus3} gives a complete resolution in the first nontrivial case of genus 3. We obtain some analogous results for bielliptic singular curves and curves with bielliptic singularities in Theorem~\ref{biell_curves} and Remark~\ref{biell_remark}, respectively.

\medskip
The connection between gonality, embeddings in scrolls, and rational curves with $\ka$-hyperelliptic singularities is a theme to which we intend to return in the future.

\subsection*{Acknowledgements} We would like to thank Maksym Fedorchuk, Joe Harris, Steve Kleiman, Karl-Otto St\"ohr, Fernando Torres, and Filippo Viviani for illuminating conversations, as well as the mathematics department at UFMG for making this collaboration possible. The first and third authors are partially supported by CNPq grant numbers 309211/2015-8 and 306914/2015-8, respectively. The second author is partially supported by FAPEMIG.

\subsection*{Conventions for rational curves and their singularities}
%Our goal here is to obtain (effective) estimates on the dimension of rational curves of fixed degree and ambient dimension with singularities of prescribed type. The coarsest notion of ``type" is that of arithmetic genus. 
We work over $\mb{C}$. By {\it rational curve} we always mean a projective curve of geometric genus zero. A {\it numerical semigroup} is a subsemigroup ${\rm S} \sub \mb{N}$ of the natural numbers with finite complement $G_{\rm S}$; the {\it genus} $g=g({\rm S})$ is equal to the cardinality of $G_{\rm S}$.
The {\it genus} of a value semigroup of a singularity encodes the contribution of that singularity to the arithmetic genus of the underlying projective curve.
%Again this is motivated by our classical understanding of smooth hyperelliptic curves, which may be characterized as those admitting a Weierstrass semigroup of maximal weight. 

%, which also leads us to some conjectures about the relationship between the {\it gonality} of singular (rational) curves and the semigroups of their singularities.

%\subsection{Roadmap}

%A slightly more detailed outline of the results obtained here is as follows. Theorems~\ref{thmmwg} and

%\medskip

%\medskip

%Finally, Torres (building on earlier work of Kato) has proved that for smooth curves of genus $g \gg \ka$, the existence of a $\ka$-hyperelliptic semigroup is equivalent to the existence of a base-point-free $g^{2\ka+1}{6\ka+2}$. As we have already mentioned, the analogous equivalence fails in our context, but 

%Finally, Kato \cite{Ka} proved that a smooth curve of genus at least 10 is bielliptic if and only if it admits a base-point-free $g^3_8$; we show the analogous equivalence fails in our context.

\section{Points of nearly-maximal weight}

\subsection{Weighting unibranch singularities}

A classical result of semigroup theory states that every numerical semigroup $\sss \sub \mb{N}$ with complement $G_{\sss}= \{\ell_1, \dots, \ell_g\}$ of cardinality $g$ is of {\it weight} 
\[
w(\sss):= \sum_{i=1}^g (\ell_i-i) \leq g(g-1)/2,
\]
with equality if and only if $2$  belongs to $\sss$. This in turn leads easily to a (well-known) fact that hyperelliptic curves are precisely those curves that admit points with semigroups of maximal weight. In an influential paper, T. Kato  \cite{Ka} succeeded in extending the classical story, obtaining an upper bound for the weight of a Weierstrass semigroup of an arbitrary point of a {\it nonhyperelliptic} curve, and classifying all maximal nonhyperelliptic curves. In this section we explore the extent to which an analogous story holds in the context of rational curves with a unique unibranch singular point.

\medskip
It is standard practice in algebraic geometry to let the {\it weight} of a nonsingular point $P$ of an integral and projective curve $C$ denote the weight of the semigroup 
\begin{equation}\label{standard_weight}
\sss_{C,P}=
\{n\in\nn\ |\,h^0(\oo_C((n-1)P))<h^0(\oo_C(nP))\}.
\end{equation}
On the other hand, when $P$ is a singularity, which we will assume throughout this section to be unibranch, there are other seemingly natural choices of weights available. Instead of using \eqref{standard_weight} we have opted to define the weight in terms of pole orders of differentials; it will turn out that this alternative version is obtained naturally as a perturbation of \eqref{standard_weight}, and  %this amount is easily computable from the semigroup of the singular point. But, to do so, we need to generalize the semigroup-theoretic notion of weight to arbitrary numerical sets and then making a slightly different choice of weighting semigroups; it will turn out 
that it agrees with \eqref{standard_weight} precisely when the singularity is Gorenstein.

\medskip
%More precisely, we will weight singular points according to the corresponding sequences of pole orders of differentials, rather than according to vanishing orders of local embedding sections.  
Concretely now, suppose $\pb$ is the preimage of a unibranch singularity $P$, and let 
\[
H^0(\ww_C)=\langle \lambda_1,\ldots,\lambda_g\rangle
\]
denote a basis for the space of global sections of the dualizing sheaf of the underlying (singular) curve $C$. Let $k_i:=|\vpb(\lambda_i)|$ denote the vanishing order of $\la_i$, where $\vpb$ is the valuation naturally associated to a desingularization of $P$. Now reorder the $\lambda_i$ so that 
\[
0<k_1<\ldots<k_{g-1}
\]
and accordingly set
\begin{equation}
\label{equdfp}
w(P):=\sum_{i=1}^{g-1}(k_i-i).
\end{equation}

\medskip
The latter definition warrants a bit of explanation. %The motivation for this definition is basically the following.
%Kato did not even mention semigroups on defining the weight of a point in a (smooth) curve. 
In \cite{Ka}, Kato defined the weight of a point on a (smooth) curve $C$, using global sections of the dualizing sheaf; classically, these were referred to as {\it differentials of first kind} (see, e.g., \cite{R}), and they are also called {\it regular differentials} in \cite{St2}. When $C$ is nonsingular, these sections are precisely those meromorphic differentials without poles. If $C$ is singular, they may admit poles along branches of singularities. From this point of view, an extremal situation is one in which all regular differentials of $C$ have poles along the branches of a singularity $P$. And indeed this happens precisely when $C$ is rational, $P$ is the unique singularity of $C$, and $P$ is unibranch, which is the case we are dealing with here. It is therefore quite natural to use pole orders (instead of the values themselves) of differentials when defining the weight of $P$; and in doing so, we extend Kato's original definition.

\medskip
For our purposes, it will also be useful to introduce a slight generalization of the usual semigroup-theoretic notion of weight based on pole orders. Namely, to {\it any} subset ${\rm T}$ of the natural numbers with cardinality-$g$ complement 
\begin{equation*}
%\label{equgap}
\nn\setminus{\rm T}=\{\ell_1,\ell_2,\ldots,\ell_g\}
\end{equation*}
we let the {\it weight} $W_{\rm T}$ of ${\rm T}$ be the quantity
\begin{equation}
\label{equwei}
W_{\rm T}:= \sum_{i=1}^{g}\ell_i- \binom{g+1}{2}.
\end{equation}

%and we also are able to obtain the converse of his theorem that states that a nonhyperelliptic smooth curve with a point of maximal weight is bielliptic.  

\medskip
According to \cite[Thm. 2.8]{St2}, the weight may be explicitly computed from the semigroup $\sss$ of $P$. Indeed, let $c$ denote the conductor of $\sss$. Setting
\begin{equation}
\label{equdkk}
{\rm K}:=\{a\in\zz\,|\,c-a-1\not\in\sss\}.
\end{equation}
we have
\begin{equation}
\label{equwiw}
w(P)=W_{\rm K}
\end{equation}
where the latter is computed as in (\ref{equwei}) since $\kk$ may not be a semigroup in general. Rather, it is a semigroup if and and only if $\kk=\sss$, in which case $\sss$ is symmetric. This happens if and only if the point $P$ is Gorenstein, i.e., when $\ww_{C,P}$ is a free $\op$-module. 

\begin{rem} An interesting fact about (the various ways of computing) the weight of a subset ${\rm T}$ of $\nn$ with finite complement is hidden in \eqref{equwiw}. Namely, the right hand side of that equality, based on \eqref{equwei}, gives the standard version: one proceeds from 0 to the conductor, assigning to each gap encountered the number of positive integers in ${\rm T}$ left behind; these tallies of missed numbers are then summed together. The left hand side, based on \eqref{equdfp}, gives an alternative: one proceeds backward from the conductor to 0, assigning to each positive integer in ${\rm T}$ the number of gaps left behind. It is easy to check that both procedures yield the same value. For example, consider the set (not semigroup) ${\rm T}=\{0,1,3,4,6,7,9,10,12,\to\}$. Proceeding from 0 to the conductor 12, and assigning to each gap the numbers of positive elements of ${\rm T}$ left behind, we obtain:
\begin{equation}
\label{equfst}
W_{\rm T}=1+3+5+7=16.
\end{equation}
Similarly, if we proceed backward from 12 to 0, and assign to each positive integer in ${\rm T}$ the number of gaps left behind, we calculate:
\begin{equation}
\label{equscd}
W_{\rm T}=1+1+2+2+3+3+4=16.
\end{equation}

\medskip
Now consider the monomial curve $C:=\overline{(t^3,t^{13},t^{14})}\subset\mathbb{P}^3$; the bar notation, which follows \cite{Ab}, denotes projective closure. It is a rational curve of genus 8 with a unique singularity supported in $P=(0:0:0:1)$, which is unibranch. The semigroup of $P$ is $\sss=\{0,3,6,9,12,\to\}$, and hence $\kk={\rm T}$. So the weight of $P$ corresponds to \eqref{equfst} using the equality \eqref{equwiw}. On the other hand, one computes
$$
H^0(\ww_C)=\langle dt/t^2, dt/t^3, dt/t^5,dt/t^6,dt/t^8, dt/t^9,dt/t^{11},dt/t^{12}\rangle
$$
and from this point of view the weight of $P$ corresponds to \eqref{equscd} using definition \eqref{equdfp}.

%\medskip
%As in the case of semigroups, one may associate to any subset of $\mb{N}$ with finite complement, a Young tableau from which one is able to read the weight by means of the total amount of squares above the Dick path. If so, the two ways of computing weight priorly described correspond, respectively, to summing up columns, and summing up rows.
\end{rem}

\subsection{The maximal case}

Next we characterize points of maximal weight.  As in the previous subsection (and throughout the rest of this section), a \emph{curve} $C$ is a rational integral and projective one-dimensional scheme of arithmetic genus $g$, with a unique singular point $P$, which is unibranch. 

\medskip
Adapting \cite{To}, we say that $P$ is of \emph{$\ka$-hyperelliptic type} if its semigroup $\sss$ satisfies the following properties: (a) $\sss$ has $\ka$ even numbers in $[2,4\ka]$; (b) $4\ka+2\in\sss$. For short, we say that $P$ is \emph{hyperelliptic} if it is $0$-hyperelliptic, i.e., $2\in\sss$.

\begin{thm}
\label{thmmwg}
The following are equivalent:
\begin{itemize}
\item[(i)] $w(P)$ is maximal as $(C,P)$ varies among curves of genus $g$;
\item[(ii)] $w(P)=\binom{g}{2}$;
\item[(iii)] $P$ is hyperelliptic.
\end{itemize}
\end{thm}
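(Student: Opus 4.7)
The plan is to reduce the theorem to the classical Weierstrass gap inequality for numerical semigroups via the explicit identity
\[
w(P) \;=\; W_\kk \;=\; w(\sss) + (2g - c),
\]
where $c$ is the conductor of $\sss$, $w(\sss) = \sum(\ell_i - i)$ is the standard gap-weight of the semigroup, and the last summand vanishes precisely when $\sss$ is symmetric (i.e.\ when $P$ is Gorenstein, in which case $\kk=\sss$). To prove this identity, first I would note that via the involution $a \mapsto c-1-a$, gaps of $\kk$ correspond bijectively with the elements of $\sss \cap [0,c-1]$. Summing and using the partition identity
\[
\sum_{s\in\sss,\,s<c} s \;=\; \binom{c}{2} - \Bigl(w(\sss) + \binom{g+1}{2}\Bigr),
\]
and rearranging the defining sum (\ref{equwei}) for $W_\kk$, the claimed formula drops out after elementary algebra.

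Next I would invoke the classical inequality $\ell_i \leq 2i - 1$ for the ordered gaps of $\sss$. Writing $\delta_i := (2i-1) - \ell_i \geq 0$, we have $w(\sss) = \binom{g}{2} - \sum_i \delta_i$. Crucially, since the largest gap satisfies $\ell_g = c - 1$, we get $\delta_g = 2g - c$, and substituting into the identity above yields
\[
w(P) \;=\; w(\sss) + \delta_g \;=\; \binom{g}{2} - \sum_{i=1}^{g-1}\delta_i \;\leq\; \binom{g}{2}.
\]
This bound gives the equivalence (i) $\Leftrightarrow$ (ii), and reduces the characterization of the extremal case to the condition $\delta_i = 0$ for all $i<g$.

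For (ii) $\Rightarrow$ (iii), the condition $\delta_i = 0$ for $i<g$ translates into $\ell_i = 2i - 1$ for $i = 1,\ldots,g-1$; in particular $2$ is not a gap of $\sss$, so $2 \in \sss$ and $P$ is hyperelliptic. Conversely, if $P$ is hyperelliptic then the semigroup must be the unique hyperelliptic semigroup $\sss = \{0,2,4,\ldots,2g\}\cup\{2g+1,2g+2,\ldots\}$ of genus $g$, which is symmetric with $c = 2g$, so immediately $w(P) = w(\sss) = \binom{g}{2}$, proving (iii) $\Rightarrow$ (ii).

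The main obstacle is cleanly establishing the key identity $w(P) = w(\sss) + (2g - c)$: once one accepts the set-theoretic translation between $\kk$ and $\sss$, the remaining work is bookkeeping, but this bookkeeping is precisely what allows us to bypass the pole-order description of $w(P)$ and work entirely with gaps of $\sss$. With the identity in hand, the theorem reduces to the classical estimate $\ell_i \leq 2i - 1$, augmented by the elementary observation that the \emph{last} gap equals $c-1$.
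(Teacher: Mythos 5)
Your proposal is correct, and for the key implication (ii) $\Rightarrow$ (iii) it takes a genuinely different route from the paper. Both arguments rest on the identity $W_{\kk}=W_{\sss}+2g-c$ (the paper's Lemma~\ref{k_lemma}), which you rederive directly via the gap involution $a\mapsto c-1-a$, whereas the paper gives an algebraic proof via local duality and a combinatorial one via Dyck paths. From that point the paper proceeds by induction on the genus along the semigroup tree, invoking \cite{BB} to guarantee that the parent of a non-symmetric semigroup is again non-symmetric, and reducing the inductive step to a bound on the largest gap of the parent. You instead note that, with $\delta_i:=(2i-1)-\ell_i\geq 0$, the correction term $2g-c$ equals $\delta_g$ exactly (because $\ell_g=c-1$), so that
\[
W_{\kk}=\binom{g}{2}-\sum_{i=1}^{g-1}\delta_i .
\]
This turns the theorem into the classical estimate $\ell_i\leq 2i-1$, is shorter, avoids the semigroup tree and the external input from \cite{BB}, and yields a closed-form expression for the deficiency $\binom{g}{2}-w(P)$. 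One caveat: your final step, ``$\ell_i=2i-1$ for all $i\leq g-1$ forces $2\in\sss$,'' requires $g\geq 3$; for $g=2$ the non-symmetric semigroup $\langle 3,4,5\rangle$ has $W_{\kk}=0+(4-3)=1=\binom{2}{2}$, so the implication (ii) $\Rightarrow$ (iii) genuinely fails in genus $2$. This is a defect of the theorem as stated rather than of your argument: the paper's induction breaks down at exactly the same spot, since the parent of $\langle 3,4,5\rangle$ in the semigroup tree is the symmetric (hyperelliptic) genus-one semigroup, contradicting the claim that the parent of a non-symmetric semigroup is non-symmetric.
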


The proofs of both Theorem~\ref{thmmwg} and Theorem~\ref{biell}, which characterizes curves of {\it submaximal} weight, use the following auxiliary result, which may be of independent interest. %Though it is purely numerical, we opted to derive it from local duality of singular curves, which is the original motivation.

\begin{lemma}\label{k_lemma}
Let $\sss$ be a value semigroup of genus $g \geq 0$, and let $\kk$ be as defined in \eqref{equdkk}. The corresponding weights are related by
\begin{equation}\label{K_versus_S}
W_{\kk}= W_{\sss}+ 2g-c.
\end{equation}
\end{lemma}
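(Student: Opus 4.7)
The plan is to compute both $W_\sss$ and $W_\kk$ explicitly in terms of the gap/nongap structure of $\sss$ below the conductor $c$, and subtract. The key observation is that \eqref{equdkk} sets up an explicit bijection between the complement $\nn\setminus\kk$ and the set of nongaps $\sss\cap[0,c-1]$, via $a\mapsto c-a-1$. To verify this, I would first argue that $\kk\sub\nn$: for $a<0$ one has $c-a-1>c$, so $c-a-1\in\sss$ and hence $a\notin\kk$; and for $a\ge c$ one has $c-a-1<0$, so $c-a-1\notin\sss$ and thus $a\in\kk$. For $a\in[0,c-1]$ the condition $a\notin\kk$ reads exactly $c-a-1\in\sss\cap[0,c-1]$. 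In particular $\kk$ has exactly $c-g$ gaps in $\nn$.

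From here the computation is direct. The sum of the gaps of $\kk$ is
\[
\sum_{a\in\nn\setminus\kk}a \;=\; \sum_{s\in\sss\cap[0,c-1]}(c-s-1) \;=\; (c-g)(c-1)-\sum_{s\in\sss\cap[0,c-1]}s,
\]
and the remaining inner sum is eliminated by splitting $[0,c-1]$ into nongaps and gaps:
\[
\sum_{s\in\sss\cap[0,c-1]}s \;+\; \sum_{i=1}^{g}\ell_i \;=\; \binom{c}{2}.
\]
Plugging these into \eqref{equwei}, with the ``$g$'' there replaced by the actual complement cardinality $c-g$, yields
\[
W_\kk \;=\; (c-g)(c-1)-\binom{c}{2}+\sum_{i=1}^{g}\ell_i-\binom{c-g+1}{2}.
\]
Subtracting $W_\sss=\sum_{i=1}^{g}\ell_i-\binom{g+1}{2}$ cancels the $\sum\ell_i$ term, leaving an expression purely in $c$ and $g$.

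The remaining step, and the only real obstacle, is the elementary algebraic simplification: expanding the three binomial coefficients together with the product $(c-g)(c-1)$ and checking that everything collapses to $2g-c$. A minor point worth flagging at the outset is that $\kk$ is defined in \eqref{equdkk} a priori as a subset of $\zz$, but the first paragraph above shows that it is in fact contained in $\nn$, so the weight prescription \eqref{equwei} applies to it unambiguously once the correct complement size $c-g$ is substituted for the $g$ appearing there.
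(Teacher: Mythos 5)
Your proof is correct, but it takes a genuinely different (and more elementary) route than either of the paper's two proofs. The paper first splits the identity into $g+g^{\pr}=c$ and $W_{\kk}+g^{\pr}=W_{\sss}+g$, proving the former by algebraic geometry (the normalization, the conductor sheaf, the fact that $v_{\pb}(\wwp)=\kk$, and the local-duality identity $\ell(\obp/\op)=\ell(\op/\cp)+\ell(\wwp/\op)$) and the latter by a sum manipulation based on the self-pairing $a\in\kk\setminus\sss\iff c-1-a\in\kk\setminus\sss$ inside $G_{\sss}$; it then gives a second proof via the Dyck-path/Young-tableau encoding of Bras-Amor\'os and de Mier, comparing the top rows of the tableaux of $\sss$ and $\kk$. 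You instead read everything off directly from \eqref{equdkk}: the involution $a\mapsto c-1-a$ puts $\nn\setminus\kk$ in bijection with $\sss\cap[0,c-1]$, which yields $\#(\nn\setminus\kk)=c-g$ with no algebraic input at all, and then the two gap-sums are compared through the partition of $[0,c-1]$ into gaps and nongaps, reducing the lemma to a binomial identity. I checked that identity: $(c-g)(c-1)-\binom{c}{2}-\binom{c-g+1}{2}+\binom{g+1}{2}$ does collapse to $2g-c$, so the step you flag as the ``only real obstacle'' goes through. What the paper's versions buy is conceptual alignment with the rest of the article --- the algebraic proof explains \emph{why} $\kk$ is the right object (it is the value set of the dualizing module), and the tableau proof sets up machinery reused in Theorem~\ref{kappa_hyperelliptic_weight} --- whereas yours is shorter and self-contained. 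One harmless slip: for $a=-1$ you get $c-a-1=c$, not $c-a-1>c$; the conclusion $c-a-1\in\sss$ still holds since $c\in\sss$.
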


\begin{proof}[Proof of Lemma~\ref{k_lemma}.] We will give two proofs. The first is algebraic, the second is combinatorial. 

\medskip
{\it Algebraic proof.} It suffices to show that
\begin{equation}\label{K_versus_S_bis}
g+ g^{\pr}=c \text{ and } W_{\kk}+ g^{\pr}= W_{\sss}+ g
\end{equation}
where $g^{\pr}=g^{\prime}(\kk)$ denotes the {\it genus} of $\kk$, i.e. the cardinality of $\mb{N} \setminus \kk$. 

\medskip
To prove the first equality in \eqref{K_versus_S_bis}, let $\pi:\cb\to C$ be the normalization map, let $\overline{\oo}:=\pi_{*}(\oo_{\cb})$ and let $\mathcal{C}:=\mathcal{A}\text{nn}(\overline{\oo}/\mathcal{O})$ be the conductor sheaf of $C$. From \cite[Lem 1.6]{KM}, the dualizing sheaf $\omega$ of $C$ satisfies 
%one may take $\ww$ as an embedding of the dualizing sheaf on the constant sheaf of rational functions so that $\op\subset\wwp\subset\obp$. 
\[
\mc{O} \sub \omega \sub \overline{\mc{O}}.
\]
Since $\wwp$ is a canonical $\op$-module, from  \cite[Thm. 2.11]{St2} or \cite[Prop. 2.14.(iv)]{BDF} we have $v_{\pb}(\wwp)=\kk$ and from \cite[Lem. 19.(c)]{BF} we have that
$$
\ell(\obp/\op)=\ell(\op/\cp)+\ell(\wwp/\op).
$$ 
(The latter equality may also be deduced from \emph{local duality}, as in \cite[Thm. 1.5]{St2}.) It follows that
\begin{align*}
g+g' &=\#(\nn\setminus\sss)+\#(\nn\setminus\kk)\\
      &=\ell(\obp/\op)+\ell(\obp/\wwp)\\
      &=\ell(\op/\cp)+\ell(\wwp/\op)+\ell(\obp/\wwp)\\
      &=\ell(\obp/\cp) \\
      &=c.
\end{align*}

\medskip
To see why the second equality in \eqref{K_versus_S_bis} holds, note that the definition of $\kk$ implies that for every nonnegative integer $a \leq c-1$,
\[
a \in \kk \setminus \sss \iff c-1-a \in \kk \setminus \sss.
\]
Accordingly, set $m:= \#(\kk \setminus \sss)=g-g^{\pr}$. As usual, let $G_{\sss}= \{\ell_1,\dots,\ell_g\}$ denote the gap sequence of $\sss$, and let $\mb{N} \setminus \kk= \{\ell_{j_1}, \dots, \ell_{j_{g^{\pr}}}\} \sub G_{\sss}$ denote the gap sequence of $\kk$. Applying the definition of $W_{\sss}$, we obtain
\[
\begin{split}
W_{\sss}+g&= \sum_{i=1}^g \ell_i - \sum_{j=1}^{g-1} j \\
&= \sum_{i=1}^{g^{\pr}} \ell_{j_i}+ \frac{m(c-1)}{2}- \sum_{j=1}^{g-1} j \\
&= \sum_{i=1}^{g^{\pr}} \ell_{j_i}+ \frac{m(c-1)}{2}- \sum_{j=1}^{g^{\pr}-1} j - (g^{\pr}+ \cdots+ g-1) \\
&= \sum_{i=1}^{g^{\pr}} \ell_{j_i}+ \frac{m(c-1)}{2}- \sum_{j=1}^{g^{\pr}-1} j - \frac{m(g^{\pr}+g-1)}{2}
\end{split}
\]
Applying $g+g^{\pr}=c$, the right side of equality now becomes $W_{\kk}+g^{\pr}$, as desired.

\medskip
{\it Combinatorial proof.} Bras-Amor\'os and de Mier proved \cite{BdM} that each numerical semigroup may be represented as a Dyck path $\tau=\tau({\rm S})$ on a $g \times g$ square grid with axes labeled by $0,1, \dots, g$. Each path starts at $(0,0)$, ends at $(g,g)$, and has unit steps upward or to the right. Namely, the $i$th step of $\tau$ is up if $i \notin {\rm S}$, and is to the right otherwise. The weight $W_{\rm S}$ of ${\rm S}$ is then equal to the total number of boxes in the Young tableau $T_{{\rm S}}$ traced by the upper and left-hand borders of the grid and the Dyck path $\tau$. Indeed, the contribution of each gap $\ell$ of ${\rm S}$ to $W_{\rm S}$ is computed by the number of boxes inside the grid and to the left of the corresponding path edge.

\medskip
We may similarly encode $\kk=\kk({\rm S})$ as a path inside a $g_{\kk} \times g$ grid, where $g_{\kk}= \#\{\mb{N} \setminus \kk\}$; $W_{\kk}$ is then the total number of boxes in the Young tableau $T_{\kk}$ delimited by the path and the border of the grid. 

\medskip
We now focus on precisely how the tableaux associated to $W_S$ and $W_{\kk}$ differ. Namely, let $\ell_k \in G_{\rm S}$ denote the smallest gap larger than the smallest nonzero element in ${\rm S}$. The following facts are immediate consequences of construction.
\begin{itemize}
\item[i.] The total contribution of the gaps $\ell_j$, $k \leq j \leq g-1$ to $W_S$ is given by (the area of) a Young tableau $T_1$ which lies below the uppermost row of boxes in the bounding grid of ${\rm S}$.
\item[ii.] The boxes to the left of the subpath of $\kk$ with labels $c-1-\ell_k$, $k \leq j \leq g-1$ define a Young tableau that is transpose to $T_1$, and which lies below the uppermost row of boxes in the bounding grid of $\kk$. See Figures 1 and 2 for an illustration of the tableaux involved when ${\rm S}=\langle 4,10,11,17\}$, in which case $g=8$ and $c=14$.
\end{itemize}

%\subfile{semigroups/WS_versus_WK_one}
\begin{figure}

\begin{tikzpicture}[scale=0.30]
%\hspace{90pt}
\draw[blue, very thin] (0,0) rectangle (8,8);
\filldraw[draw=blue, fill=red] (0,3) rectangle (1,4);

\filldraw[draw=blue, fill=red] (0,4) rectangle (1,5);
%\filldraw[draw=blue, fill=lightgray] (1,4) rectangle (2,5);

\filldraw[draw=blue, fill=red] (0,5) rectangle (1,6);
%\filldraw[draw=blue, fill=lightgray] (1,5) rectangle (2,6);
%\filldraw[draw=blue, fill=lightgray] (2,5) rectangle (3,6);

\filldraw[draw=blue, fill=red] (0,6) rectangle (1,7);
\filldraw[draw=blue, fill=red] (1,6) rectangle (2,7);
%\filldraw[draw=blue, fill=lightgray] (2,6) rectangle (3,7);
%\filldraw[draw=blue, fill=lightgray] (3,6) rectangle (4,7);

\filldraw[draw=blue, fill=lightgray] (0,7) rectangle (1,8);
\filldraw[draw=blue, fill=lightgray] (1,7) rectangle (2,8);
\filldraw[draw=blue, fill=lightgray] (2,7) rectangle (3,8);
\filldraw[draw=blue, fill=lightgray] (3,7) rectangle (4,8);
\filldraw[draw=blue, fill=lightgray] (4,7) rectangle (5,8);
%\filldraw[draw=blue, fill=lightgray] (5,7) rectangle (6,8);
%\filldraw[draw=blue, fill=lightgray] (6,7) rectangle (7,8);

\end{tikzpicture}

\caption{Young tableau associated to ${\rm S}= \langle 4,10,11,17 \rangle$. The subtableau $T_1$ is indicated in red, while the contribution to $W_{\rm S}$ arising from the uppermost line is in grey.}
\end{figure}
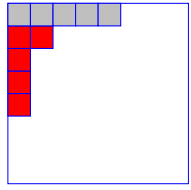

%\subfile{semigroups/WS_versus_WK_two}

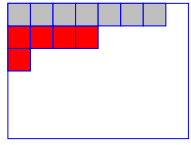
\begin{figure}

\begin{tikzpicture}[scale=0.30]
%\hspace{90pt}
\draw[blue, very thin] (0,0) rectangle (8,6);
\filldraw[draw=blue, fill=red] (0,3) rectangle (1,4);

\filldraw[draw=blue, fill=red] (0,4) rectangle (1,5);
\filldraw[draw=blue, fill=red] (1,4) rectangle (2,5);
\filldraw[draw=blue, fill=red] (2,4) rectangle (3,5);
\filldraw[draw=blue, fill=red] (3,4) rectangle (4,5);

\filldraw[draw=blue, fill=lightgray] (0,5) rectangle (1,6);
\filldraw[draw=blue, fill=lightgray] (1,5) rectangle (2,6);
\filldraw[draw=blue, fill=lightgray] (2,5) rectangle (3,6);
\filldraw[draw=blue, fill=lightgray] (3,5) rectangle (4,6);
\filldraw[draw=blue, fill=lightgray] (4,5) rectangle (5,6);
\filldraw[draw=blue, fill=lightgray] (5,5) rectangle (6,6);
\filldraw[draw=blue, fill=lightgray] (6,5) rectangle (7,6);

\end{tikzpicture}

\caption{Young tableau associated to $K({\rm S})$ when ${\rm S}= \langle 4,10,11,17 \rangle$. The transpose of the tableau $T_1$ arising from the Young tableau of ${\rm S}$ is in red, while the contribution to $W_{\rm K}$ arising from the uppermost line is in grey.}
\end{figure}

\medskip
In particular, $W_{\kk}-W_{\rm S}$ is computed by the difference between the (number of boxes in) the uppermost rows of $T_{\kk}$ and $T_{\rm S}$, respectively. We conclude by noting that the uppermost row of $T_{\rm S}$ has $\ell_g-g=c-1-g$ boxes, while the uppermost row of $T_K$ has $g-1$ boxes.
\end{proof}

\begin{proof}[Proof of Theorem~\ref{thmmwg}.] First note that if $2\in\sss$ then $\sss$ is symmetric, so $w(P)=w(\sss)$, and $(iii) \implies (ii)$ follows trivially.

\medskip
\noindent $(ii) \implies (iii)$: From \eqref{equwiw}, we have $w(P)=W_{\kk}$. On the other hand, $W_{\kk}$ is defined by \eqref{equwei} and the dependency of $\kk$ on $\sss$ is spelled out by \eqref{equdkk}. We will derive the desired conclusion by showing that for any semigroup $\sss$ with $W_{\kk}=\binom{g}{2}$, we have $2\in\sss$. If $\sss$ is symmetric then $\kk=\sss$ and the result is classical. So it suffices to show that if $\sss$ is non-symmetric, we have $W_{\kk}<\binom{g}{2}$. For this purpose, we argue by induction on $g$. If $g=1$, the claim is vacuous. Now suppose the claim holds in all genera $g<\ga$, for some positive integer $\ga \geq 1$. Choose any non-symmetric semigroup $\sss^{\pr}$ of genus $\ga$. Note that \cite[Lem. 4]{BB} establishes that nonhyperelliptic symmetric semigroups are leaves of the semigroup tree, i.e. such semigroups have no descendents. In particular, the unique parent $\sss$ of $\sss^{\pr}$ in the semigroup tree is necessarily non-symmetric. By induction, it suffices to show that the corresponding weights $W_{\kk^{\pr}}$ and $W_{\kk}$ derived from $\sss^{\pr}$ and $\sss$, respectively, satisfy $W_{\kk^{\pr}}- W_{\kk} \leq \ga-1$.

\medskip
Applying \eqref{K_versus_S}, we see that the latter inequality, in turn, is equivalent to the assertion that
\begin{equation}\label{S_versus_Sprime_weights}
W_{\sss^{\pr}}- W_{\sss}+ 2+ c-c^{\pr} \leq \ga-1
\end{equation}
where $c$ and $c^{\pr}$ are the conductors of $\sss$ and $\sss^{\pr}$, respectively. However, it is also clear that
\[
c-1= \ell_{\ga-1}, \hspace{10pt} c^{\pr}-1= \ell_{\ga}, \text{ and } W_{\sss^{\pr}}- W_{\sss}= \ell_{\ga}
\]
where $\ell_{\ga}$ and $\ell_{\ga^{\pr}}$ are the largest gaps of $\sss$ and $\sss^{\pr}$, respectively. So \eqref{S_versus_Sprime_weights} is in fact equivalent to the assertion that
\begin{equation}\label{largestgap_S}
\ell_{\ga-1} \leq 2\ga-3.
\end{equation}
Finally, the inequality \eqref{largestgap_S} follows directly from the fact that $\sss$ is non-symmetric, since the conductor of a non-symmetric semigroup is always strictly less than twice the genus. 
%Indeed, if $\sss^{\pr}$ is non-symmetric, the conductor $c$ of $\sss^{\pr}$ is strictly less than twice the genus $\ga$ of $\sss^{\pr}$; and $\ell_{\ga}=c-1$, by definition of the largest gap.
%So we have to show that if $\sss$ is a possibly non-symmetric semigroup such that $W_{\kk}=\binom{g}{2}$ then $2\in\sss$ (in particular, it would be symmetric). (We've already done this, and have to remember the proof!)

\medskip
\noindent $(i) \iff (iii)$: In the previous item, we showed that $2 \notin \sss \implies W_{\kk}< \binom{g}{2}$. In particular, $W_{\kk}$ is not maximal whenever $2 \notin \sss$. The converse is obvious.
%We just have to prove that $W_{\kk}$ is maximal if and only if $W_{\kk}=g(g-1)/2$. (We've already done this, and have to remember the proof!)
\end{proof}

\subsection{The submaximal case}

\medskip \noindent
In a similar vein, we say the singular point $P$ of $C$ is \emph{bielliptic} if it is $1$-hyperelliptic, i.e., $4$ and $6$ are the smallest positive integers in $\sss$. Our next result establishes that among singular points of a given genus, the bielliptic ones are precisely those of {\it submaximal} weight, i.e., of maximal weight among nonhyperelliptic points.

%We call a curve $C$ \emph{bielliptic} if there exists a degree-2 cover $C\to E$ of an elliptic curve $E$, that is, of a curve of arithmetic genus $1$. 
%These curves were called ``elliptic-hyperelliptic" by R. D. M. Accola in \cite{A}, though studied by A. Comessatti \cite{C} in the early 1900's. The term ``bielliptic" is more common in the literature nowadays.

\begin{thm}\label{biell}
Let  $C$ be a curve of genus $g \geq 11$. Then the following are equivalent:
\begin{itemize}
\item[(i)] $w(P)$ is submaximal as $(C,P)$ varies among curves of genus $g$;
\item[(ii)] $w(P)=(g^2-5g+10)/2$;
\item[(iii)] $P$ is bielliptic.
\end{itemize}
\end{thm}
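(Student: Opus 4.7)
The plan is to establish the equivalences cyclically, with Lemma~\ref{k_lemma} providing the bridge between the semigroup weight $W_{\sss}$ and the weight $w(P) = W_{\kk}$.

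\smallskip

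For (iii) $\implies$ (ii), I would first classify bielliptic semigroups structurally. Since $4, 6 \in \sss$ are the two smallest positive elements, every even integer $\geq 4$ lies in $\sss$ and hence $2$ is the only even gap. Letting $a$ denote the smallest odd element of $\sss$, the closure of $\sss$ under addition by $4$ and $6$ forces a dichotomy based on whether $a+2$ lies in $\sss$. Either $a+2 \in \sss$, in which case $\sss$ contains all odd integers $\geq a$, forcing $a = 2g-1$ and conductor $c = 2g-2$; or $a+2 \notin \sss$, in which case the odd elements of $\sss$ are $\{a, a+4, a+6, \ldots\}$, forcing $a = 2g-3$ and $c = 2g$. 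In both cases the gap set is explicit, and a direct calculation yields $W_{\sss} = (g^2-5g+6)/2$ in the first case and $W_{\sss} = (g^2-5g+10)/2$ in the second. Applying Lemma~\ref{k_lemma} gives $W_{\kk} = W_{\sss} + 2g - c = (g^2-5g+10)/2$ in both cases.

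\smallskip

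For (ii) $\implies$ (iii), suppose $W_{\kk} = (g^2-5g+10)/2$. Theorem~\ref{thmmwg} immediately rules out $\sss$ being hyperelliptic. Writing $W_{\sss} = W_{\kk} - 2g + c$, I would split into the symmetric case ($c = 2g$, whence $W_{\sss} = (g^2-5g+10)/2$) and the non-symmetric case ($c \leq 2g-2$, whence $W_{\sss} \leq (g^2-5g+6)/2$). In each case, matching the resulting value (or upper bound) of $W_{\sss}$ against Torres' \cite{To} asymptotic classification of non-hyperelliptic numerical semigroups by submaximal weight pins $\sss$ down to one of the two bielliptic types constructed above. The hypothesis $g \geq 11$ is invoked precisely to discard the finite list of sporadic low-genus semigroups that fall outside Torres' classification. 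Finally, (i) $\Leftrightarrow$ (iii) follows at once: the reasoning just given establishes that any non-hyperelliptic semigroup satisfies $W_{\kk} \leq (g^2-5g+10)/2$, with equality only for bielliptic semigroups, so bielliptic semigroups are exactly the non-hyperelliptic maximizers of $w(P)$.

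\smallskip

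The main obstacle is the (ii) $\implies$ (iii) step. Torres' original bounds concern $W_{\sss}$, so translating them via Lemma~\ref{k_lemma} requires simultaneously controlling the conductor $c$, since the correction term $2g-c$ could in principle allow a semigroup with moderate $W_{\sss}$ but unusually small $c$ to attain $W_{\kk} = (g^2-5g+10)/2$ without being bielliptic. Ruling out such configurations forces a careful traversal of Torres' list in both the symmetric and non-symmetric strata, and is the source of the genus threshold $g \geq 11$.
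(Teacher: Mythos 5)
Your argument for (iii) $\implies$ (ii) is correct and is essentially the paper's: the dichotomy on whether $a+2\in{\rm S}$ reproduces the two bielliptic semigroups, namely $\langle 4,6,2g-3\rangle$ (symmetric, $c=2g$) and the nonsymmetric one with $c=2g-2$, and your weight computations combined with Lemma~\ref{k_lemma} agree with the paper's. The genuine gap is in (ii) $\implies$ (iii). You correctly isolate the danger --- a non-hyperelliptic, non-bielliptic semigroup with moderate $W_{\rm S}$ but unusually small conductor could still satisfy $W_{\rm K}=W_{\rm S}+2g-c=(g^2-5g+10)/2$ --- but the proposed remedy, a ``careful traversal of Torres' list,'' does not exist as a finite procedure. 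Torres' theorem only constrains semigroups whose weight $W_{\rm S}$ lies in one of the windows $\left[\binom{g-2\ka}{2},\binom{g-2\ka}{2}+2\ka^2\right]$; a semigroup whose weight falls between consecutive windows, with a conductor small enough to compensate in $W_{\rm K}$, is not on any list Torres provides, and for a given genus there is no finite collection of such semigroups to check. (A smaller slip: non-symmetry only forces $c\le 2g-1$, not $c\le 2g-2$.)

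The paper closes this gap by a mechanism absent from your sketch: induction on the genus through the semigroup tree. The key input is \cite[Lem.~4]{BB}, which says that non-hyperelliptic symmetric semigroups are leaves of the tree, so any non-symmetric semigroup of genus $\ga$ has a non-symmetric parent of genus $\ga-1$; one then tracks how $W_{\rm K}$ changes from parent to child, reducing the inductive step to the inequality $\ell_{\ga-1}\le 2\ga-5$ on the parent's largest gap, with the single failure mode $\ell_{\ga-1}=2\ga-4$, $c=2\ga-3$ handled by a separate appeal to \cite[Thm.~3.6]{To2}. The symmetric stratum at each genus is disposed of directly by Torres since there $W_{\rm K}=W_{\rm S}$ and $c=2g$. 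The base case $g=11$ is a genuinely finite verification: \cite[Lem.~3.4]{To2} bounds the conductor by $16$, which forces multiplicity at least $4$ and leaves an explicit handful of semigroups of multiplicity $4$, $5$, $6$ to check, plus a uniform estimate for multiplicity at least $7$. You would need to supply an argument of this kind (or some substitute that controls $W_{\rm S}-c$ for all non-hyperelliptic, non-bielliptic semigroups); as written, your (ii) $\implies$ (iii) and hence also your (i) $\iff$ (iii) are incomplete.
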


\begin{proof}[Proof of Theorem~\ref{biell}.] $(iii) \implies (ii)$: Assume that (iii) holds. There are two cases to consider, depending upon whether $\sss$ has one or two minimal generators besides 4 and 6. Set $\sss^*:=\{ s\in\sss\,|\, s\leq c\}$. In the first case, we have
\[
\sss^*=\{0,4,6,8,\dots,2g-4,2g-3,2g-2,2g\}
\]
and $\sss$ is symmetric, with minimal presentation $\langle 4,6,2g-3 \rangle$. We find that $W_{\kk}=W_{\sss}= (g^2-5g+10)/2$.

\medskip
In the second case, we have
\[
\sss^*=\{0,4,6,8,\dots,2g-2\}
\]
and $\sss$ has minimal presentation $\langle 4,6,2g-3,2g-1 \rangle$ (in particular, $\sss$ is nonsymmetric, with conductor $c=2g-2$).
We find that $W_{\sss}= (g^2-5g+6)/2$; $W_{\kk}= (g^2-5g+10)/2$ then follows from \eqref{K_versus_S}.

\medskip \noindent
$(ii) \implies (iii)$: It suffices to show that the minimal nonzero elements of any nonhyperelliptic semigroup $\sss$ with $W_{\kk}=(g^2-5g+10)/2$ are 4 and 6. For this purpose, we proceed by induction on $g$, much as in the proof of Thm~\ref{thmmwg}. Note first that the claim holds when $g=11$. Indeed, \cite[Lem. 3.4]{To2}, combined with \eqref{K_versus_S}, shows that the conductor $c$ of a non-hyperelliptic, non-bielliptic semigroup $\sss$ with at-least submaximal weight necessarily satisfies $c \leq 16$, or equivalently, that the largest gap $\ell_{11}$ of $\sss$ is necessarily such that $\ell_{11} \leq 15$. Now let $e_{\sss}$ denote the multiplicity of $\sss$. The fact that all eleven gaps of $\sss$ are at most 15 forces $e_{\sss} \geq 4$.

\medskip
Say, for the sake of argument that $e_{\sss}=4$. The next-smallest element in $\sss$ is then necessarily at least 13 (according to the requirement that all eleven gaps be at most 15). But then $\sss=\langle 4,13,18,19 \rangle$ is forced, and we obtain $W_{\kk}=23$, which is less-than-submaximal.

\medskip
Pushing this line of argument further, we find that the only numerical semigroups with $e_{\sss} \in \{5,6\}$ conforming to the requirement that all 11 gaps be at most 15 are the (unique) completions of the sets $\{5,11\}$, $\{5,12\}$, $\{5,14\}$, $\{6,8\}$, $\{6,9\}$, $\{6,10,13\}$, $\{6,10,14\}$, and $\{6,10,15\}$ to minimal generating sets of genus-11 semigroups. (In particular, the gap set of the semigroup corresponding to a given set $\{s_i\}_i$ consists of precisely those integers between 1 and 15 not realizable as a positive linear combination of the $s_i$.) We leave it to the reader to check that each of these semigroups has less-than-submaximal weight.

\medskip
On the other hand, note that
\begin{equation}\label{basic_ineq_W_S}
W_{\sss} \leq (11-(e_{\sss}-1))(15-(e_{\sss}-1)-e_{\sss})= (12-e_{\sss})(16-2e_{\sss})
\end{equation}
in general, with equality in \eqref{basic_ineq_W_S} holding if and only if $G_{\sss}= \{1,2,\dots,e_{\sss}-1\} \cup \{e_{\sss},\dots, 15\}$. In particular, \eqref{basic_ineq_W_S} implies that $W_{\sss} \leq 10$ whenever $e_{\sss} \geq 7$. It then follows immediately that $W_{\kk}= W_{\sss}+ 22-c$ is submaximal whenever $e_{\sss} \geq 7$.

\medskip
Now suppose the claim holds in all genera $11 \leq g<\ga$, for some positive integer $\ga \geq 12$. Then Torres' \cite[Thm. 3.6]{To2} guarantees the claim holds for all symmetric semigroups $\sss^{\pr}$ of genus $\ga$. Now say $\sss^{\pr}$ is non-symmetric. Just as in the proof of Thm~\ref{thmmwg}, it follows that the unique parent $\sss$ of $\sss^{\pr}$ in the semigroup tree is non-symmetric. Moreover, by induction using \eqref{K_versus_S}, we may conclude whenever the largest gap $\ell_{\ga-1}$ of $\sss$ satisfies
\begin{equation}\label{largest_gap_of_S}
\ell_{\ga-1} \leq 2\ga-5.
\end{equation}
In fact, \eqref{largest_gap_of_S} may fail, but only if $\ell_{\ga-1}= 2\ga-4$ and $c=2 \ga-3$. It follows by construction that the conductor $c^{\pr}$ of $\sss^{\pr}$ necessarily satisfies
\[
2\ga-2 \leq c^{\prime} \leq 2\ga-1.
\]
In particular, \eqref{K_versus_S} implies that 
\begin{equation}\label{W_K_ineq}
W_{\kk^{\pr}}\leq W_{\sss^{\pr}}+ 2.
\end{equation}
Now suppose that 4 and 6 are {\it not} the minimal elements in $\sss$. It then follows from loc. cit. that
\begin{equation}\label{torres}
W_{\sss^{\pr}}< (g^2-5g+6)/2;
\end{equation}
Combining \eqref{W_K_ineq} with \eqref{torres} yields $W_{\kk^{\pr}} < (g^2-5g+10)/2$, as desired.

\medskip \noindent $(i) \iff (iii)$: Obvious from the above, as in the proof of Theorem~\ref{thmmwg}.
\end{proof}

\begin{rem}
The hypothesis $g \geq 11$ is necessary (and optimal); for example, $\sss= \langle 3,11 \rangle$ is a non-bielliptic semigroup of genus 10 with $W_{\rm S}=W_{\kk}=30$, while $(g^2-5g+10)/2=30$.
\end{rem}

\subsection{Beyond the submaximal case.}

Theorems~\ref{thmmwg} and \ref{biell} were inspired by Torres' weight-based asymptotic characterization \cite[Thm. 3.6]{To2} of {\it $\ka$-hyperelliptic semigroups}.
% i.e. semigroups $\sss$ for which
%\begin{itemize}
%\item[(i)] $\sss$ has exactly $\ka$ even numbers in $[2,4\kappa]$; and
%\item[(ii)] $4\ka+2\in\sss$.
%\end{itemize}
%Note that the cases $\ka=0$ and $\ka=1$ correspond to hyperelliptic and bielliptic semigroups, respectively. 
Torres shows that whenever $g \gg \ka$, a numerical semigroup $\sss$ of genus $g$ is $\ka$-hyperelliptic if and only if the weight $W_{\sss}$ satisfies
\begin{equation}\label{kappa_weight_ineq}
{g-2\ka \choose 2}\leq W_{\sss} \leq {g-2\ka \choose 2}+2\ka^2.
\end{equation}
Our Theorem~\ref{biell} refines Torres' result when $\ka=1$, in the sense that our use of the modified weight $W_{\kk}$ in place of $W_{\sss}$ allows us to replace the inequality \eqref{kappa_weight_ineq} by an equality (in which the upper bound in \eqref{kappa_weight_ineq} remains). 

\medskip
It is natural to wonder whether an analogue of the characteristic inequality \eqref{kappa_weight_ineq} holds when $W_{\sss}$ is replaced by $W_{\kk}$. We speculate the following.

\begin{conj}\label{W_K_conj} Whenever $g \gg \ka$, $\sss$ is 
$\ka$-hyperelliptic if and only if the weight $W_{\kk}$ satisfies
\begin{equation}\label{kappa_weight_ineq_for_K}
{g-2\ka \choose 2}+ 2\ka \leq W_{\kk} \leq {g-2\ka \choose 2}+2\ka^2.
\end{equation}
\end{conj}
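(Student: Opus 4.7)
The plan is to combine Torres' weight characterization~\cite[Thm. 3.6]{To2} of $\kappa$-hyperelliptic semigroups with Lemma~\ref{k_lemma}. Since Lemma~\ref{k_lemma} gives $W_{\kk} = W_{\sss} + (2g-c)$, the translation of Torres' inequality $\binom{g-2\kappa}{2} \leq W_{\sss} \leq \binom{g-2\kappa}{2}+2\kappa^2$ into the conjectured inequality on $W_{\kk}$ reduces to controlling the conductor deficit $2g-c$ over the $\kappa$-hyperelliptic locus. Note that $2g-c \geq 0$, with equality characterizing precisely the symmetric semigroups.

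For the forward direction, I would split the $\kappa$-hyperelliptic family into symmetric and non-symmetric cases. The symmetric $\kappa$-hyperelliptic semigroups satisfy $c=2g$ and $W_{\sss}=W_{\kk}=\binom{g-2\kappa}{2}+2\kappa^2$, attaining the upper end of the conjectured range; this verifies the upper bound in \eqref{kappa_weight_ineq_for_K}. For the non-symmetric case, Torres' classification parametrizes such semigroups by a fixed ``even skeleton'' (the $\kappa$ even elements in $[2,4\kappa]$ together with all even numbers from $4\kappa+2$ onward, as forced by the $\kappa$-hyperelliptic hypothesis) together with a variable ``odd tail''; each configuration determines both $W_{\sss}$ and $c$, and the claim reduces to verifying the uniform inequality $(W_{\sss} - \binom{g-2\kappa}{2}) + (2g-c) \geq 2\kappa$. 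The $\kappa=1$ calculation inside the proof of Theorem~\ref{biell}, where this quantity is always exactly $2$, provides the template.

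For the converse, assume $W_{\kk}$ lies in the target interval. Since $W_{\sss} \leq W_{\kk} \leq \binom{g-2\kappa}{2}+2\kappa^2$, the upper half of Torres' criterion is automatic and, for $g \gg \kappa$, already places $\kappa(\sss) \geq \kappa$. To obtain the matching lower bound $W_{\sss} \geq \binom{g-2\kappa}{2}$, I would argue that if $W_{\sss}$ drops below $\binom{g-2\kappa}{2}$ then $2g-c$ must compensate; but a semigroup with such a large conductor deficit has its gaps concentrated near zero, and standard estimates show $W_{\kk}$ remains of order at most linear in $g$, far below $\binom{g-2\kappa}{2}+2\kappa$. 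This sandwiches $W_{\sss}$ in Torres' interval and therefore, via his theorem, forces $\sss$ to be $\kappa$-hyperelliptic.

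The main obstacle is the precise parametrization of non-symmetric $\kappa$-hyperelliptic semigroups and the verification of the lower inequality $(W_{\sss}-\binom{g-2\kappa}{2}) + (2g-c) \geq 2\kappa$ uniformly across them. For $\kappa=1$ the non-symmetric family is essentially unique, but already for $\kappa=2$ the odd tail can take a variety of shapes, each producing a different pair $(W_{\sss},2g-c)$. Demonstrating the precise cancellation in every case seems to require a refinement of Torres' enumeration finer than what his weight inequality alone provides; this combinatorial classification is, I believe, the reason the statement remains a conjecture rather than a theorem.
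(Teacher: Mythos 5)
This statement is labeled a \emph{conjecture} in the paper: there is no proof to compare yours against. The closest the paper comes is Theorem~\ref{kappa_hyperelliptic_weight}, which establishes \eqref{kappa_weight_ineq_for_K} only in the forward direction and only under a technical hypothesis (all $\ka$ odd elements of $\sss\cap\{1,\dots,2g\}$ exceed $\ell_{g-1}$), by a Young-tableau analysis of the subtableau $T_1$ determined by the positions of the even elements $P_1<\dots<P_\ka=4\ka$; the authors explicitly note that the extremal semigroup $\sss_0=\langle 4,4\ka+2,2g-4\ka+1\rangle$ violates that hypothesis, which is why the general statement remains open. Your overall strategy --- translate Torres' inequality through the identity $W_{\kk}=W_{\sss}+2g-c$ of Lemma~\ref{k_lemma} and control the conductor deficit --- is the natural one and matches the spirit of the paper's evidence, and you are right to flag the non-symmetric enumeration as the sticking point. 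But the sketch contains concrete errors beyond that admitted gap.

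First, the claim that \emph{every} symmetric $\ka$-hyperelliptic semigroup satisfies $W_{\sss}=W_{\kk}=\binom{g-2\ka}{2}+2\ka^2$ is false for $\ka\geq 2$: by Torres' \cite[Lem.~3.2]{To2} (quoted in the paper's remarks) that maximum is attained \emph{only} by $\sss_0$, while for $\ka\geq 2$ there are many symmetric $\ka$-hyperelliptic semigroups (the even elements may be any admissible $\ka$-subset of $[2,4\ka]$ with $P_\ka=4\ka$, and the odd tail varies). For these you still owe the lower bound $W_{\sss}\geq\binom{g-2\ka}{2}+2\ka$, which is strictly stronger than what Torres provides; this symmetric case is exactly where the paper's Theorem~\ref{kappa_hyperelliptic_weight} does its work. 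Second, in the non-symmetric forward case you record only the lower inequality $(W_{\sss}-\binom{g-2\ka}{2})+(2g-c)\geq 2\ka$; since $W_{\kk}\geq W_{\sss}$, the upper bound $W_{\kk}\leq\binom{g-2\ka}{2}+2\ka^2$ does \emph{not} follow from Torres' upper bound on $W_{\sss}$ and must be argued separately. Third, the converse is not closed: if $W_{\sss}=\binom{g-2\ka}{2}-\epsilon$ and $2g-c=2\ka+\epsilon$ for a moderate $\epsilon$, then $W_{\kk}$ lands in the target interval while $W_{\sss}$ falls outside Torres' window; your ``gaps concentrated near zero'' heuristic only excludes conductor deficits of order $g$, not deficits of order $\ka$ or of any fixed size. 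Ruling out such configurations is an essential part of what would need to be proved, and is not addressed.
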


As evidence for Conjecture~\ref{W_K_conj}, we prove the following statement about the weight of a $\ka$-hyperelliptic semigroup $\sss$, under a convenient technical hypothesis.

\begin{thm}\label{kappa_hyperelliptic_weight}
Let $\sss$ denote a $\ka$-hyperelliptic semigroup of genus $g \gg \ka$, with next-to-largest gap $\ell_{g-1}$. Assume that each of the $\ka$ odd elements of $\sss \cap \{1,\dots,2g\}$ is strictly larger than $\ell_{g-1}$. Then \eqref{kappa_weight_ineq_for_K} holds.
\end{thm}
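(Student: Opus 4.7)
The plan is to combine combinatorial information from the hypothesis with Lemma~\ref{k_lemma} and the classical weight bound for the ``halved'' semigroup encoding the even part of $\sss$. First I would pin down the gap set. For $g\gg\kappa$, a $\kappa$-hyperelliptic $\sss$ contains every even integer $\geq 4\kappa+2$; hence exactly $\kappa$ of its $g$ gaps are even (lying in $[2,4\kappa]$) and the remaining $g-\kappa$ are odd. The hypothesis is equivalent to the statement that every odd integer in $[1,\ell_{g-1}]$ is a gap and that $\sss\cap\{1,\dots,2g\}$ has exactly $\kappa$ odd elements; the latter forces $c\leq 2g$ by a direct count of odd gaps versus odd integers in $[1,2g-1]$. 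Matching the $g-\kappa-1$ odd gaps $\leq\ell_{g-1}$ against the odd integers available in $[1,\ell_{g-1}]$, and using $g\gg\kappa$ to rule out $\ell_{g-1}$ being one of the even gaps (which all lie in $[2,4\kappa]$), forces $\ell_{g-1}=2g-2\kappa-3$. Since every integer in $(\ell_{g-1},\ell_g)$ lies in $\sss$ and $\ell_g$ is odd, $c=\ell_g+1$ is even of the form $c=2g-2\kappa+2j$ for some $j\in\{0,\dots,\kappa\}$.

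Next I would compute $W_{\kk}$ explicitly. Writing $E$ for the sum of the $\kappa$ even gaps, the sum of the odd gaps equals $(g-\kappa-1)^2+\ell_g$ (the first term being $1+3+\cdots+(2g-2\kappa-3)$). Expanding with $\ell_g=2g-2\kappa-1+2j$ gives
\[
W_{\sss}=\binom{g-2\kappa}{2}+E-\kappa^2-\kappa+2j.
\]
Applying Lemma~\ref{k_lemma} with $2g-c=2\kappa-2j$, the $j$-dependence cancels, leaving
\[
W_{\kk}=\binom{g-2\kappa}{2}+E-\kappa^2+\kappa.
\]

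Finally I would bound $E$. The map $n\mapsto 2n$ realizes $\widetilde{\sss}:=\{n\in\nn\,|\,2n\in\sss\}$ as a numerical semigroup of genus $\kappa$, with gap set obtained by halving the even gaps of $\sss$; hence $E=2W_{\widetilde{\sss}}+\kappa(\kappa+1)$. The classical bound $0\leq W_{\widetilde{\sss}}\leq\binom{\kappa}{2}$ translates to $\kappa^2+\kappa\leq E\leq 2\kappa^2$, so combining with the previous display yields
\[
\binom{g-2\kappa}{2}+2\kappa\leq W_{\kk}\leq\binom{g-2\kappa}{2}+\kappa^2+\kappa\leq\binom{g-2\kappa}{2}+2\kappa^2,
\]
the last inequality being trivial for $\kappa\geq 1$ (and $\kappa=0$ reducing to Theorem~\ref{thmmwg}). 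The main obstacle is the first step: translating the hypothesis into the explicit forms of $\ell_{g-1}$ and $c$, which requires the structural input that for $g\gg\kappa$ the even subsemigroup of $\sss$ has conductor at most $4\kappa$ and in particular that $\ell_{g-1}$ is odd.
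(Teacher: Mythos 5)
Your argument is correct, and it lands on exactly the same two extremal values $\binom{g-2\ka}{2}+2\ka$ and $\binom{g-2\ka}{2}+\ka^2+\ka$ that the paper obtains, but by a genuinely different mechanism. The paper reuses the Young-tableau decomposition from the proof of Lemma~\ref{k_lemma}, writing $W_{\kk}=W(T_1)+(g-1)$ and then asserting by inspection which configurations of the even elements $P_1<\dots<P_\ka=4\ka$ minimize the subtableau $T_1$ (namely $P_j=2\ka+2j$) and which maximize it (namely $P_j=4j$). You instead convert the technical hypothesis into an exact description of the gap set --- the odd gaps are $1,3,\dots,2g-2\ka-3$ together with $\ell_g$, and $c=2g-2\ka+2j$ is even --- which, after applying Lemma~\ref{k_lemma} to cancel the $j$-dependence, yields the closed formula $W_{\kk}=\binom{g-2\ka}{2}+E-\ka^2+\ka=\binom{g-2\ka}{2}+2\ka+2W_{\widetilde{\sss}}$, where $E$ is the sum of the even gaps and $\widetilde{\sss}=\{n\in\nn\,|\,2n\in\sss\}$ is the genus-$\ka$ ``halved'' semigroup. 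The classical bound $0\leq W_{\widetilde{\sss}}\leq\binom{\ka}{2}$ then finishes the proof. What your route buys is a clean justification of the extremal analysis that the paper only asserts (``it is clear that the number of boxes in $T_1$ is minimized/maximized when\dots''): the reduction to $W_{\widetilde{\sss}}$ identifies the two extremal configurations as precisely the weight-zero and hyperelliptic semigroups of genus $\ka$, via a theorem quoted at the start of Section 1. What the paper's route buys is the pictorial tableau model, which the authors use again in their remarks on Conjecture~\ref{W_K_conj}. Both arguments ultimately rest on the same structural inputs about $\ka$-hyperelliptic semigroups: exactly $\ka$ odd elements in $\sss\cap\{1,\dots,2g\}$ and all even integers $\geq 4\ka$ lying in $\sss$.
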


\begin{proof}[Proof of Theorem~\ref{kappa_hyperelliptic_weight}.] The fact that $\sss \cap \{1,\dots,2g\}$ has precisely $\ka$ odd elements is well-known, and indeed was crucially exploited by Torres in proving his \cite[Thm. 3.6]{To2}.
To prove our result, we will apply the (also well-known) facts that the smallest $\ka$ {\it even} elements $P_1< \dots <P_{\ka}$ of $\sss \cap \{1,\dots,2g\}$ satisfy $P_{\ka}=4\ka$, and that all even numbers greater than or equal to $4\ka$ belong to $\sss$.

\medskip
Given the combinatorial descriptions of $W_{\sss}$ and $W_{\kk}$ given in the proof of Lemma~\ref{k_lemma}, it now suffices to estimate the total contribution of the gaps $\ell_k, \dots, \ell_{g-1}$ to $W_{\sss}$, where $\ell_k$ is the smallest gap larger than the smallest nonzero element of $\sss$. In the terminology of the proof of Lemma~\ref{k_lemma}, this is precisely the total number of boxes in the Young tableau $T_1$.

\medskip
Under our technical hypothesis, however, the size of $T_1$ is controlled by the relative distribution of the even elements $P_1,\dots,P_{\ga}=4\ka$. In particular, it is clear that the number of boxes in $T_1$ is {\it minimized} when
\[
P_j= 2\ka+2j, j=1, \dots, \ka.
\]
With respect to these choices, $T_1$ is a staircase, i.e. each successive column has one fewer box than the preceding one. Since the first column of $T_1$ has precisely $(g-1)-(2\ka+2-1)=g-2\ka-2$ boxes, we deduce that its {\it weight} (i.e. total number of boxes) is given by
\[
W(T_1)= \binom{g-2\ka-1}{2}
\]
and it follows that
\[
W_{\kk}= W(T_1)+ g-1= {g-2\ka \choose 2}+ 2\ka.
\]

\medskip
Similarly, $W(T_1)$ is maximized when
\[
P_j= 4j, j= 1, \dots, \ka.
\]
With respect to these choices, $T_1$ is a tableau whose $j$th column is 3 boxes shorter than the $(j+1)$th, for all $j=1, \dots, \ka-1$, before stabilizing to a staircase from column $\ka$ onwards. It follows that
\[
\begin{split}
W(T_1)&= \sum_{j=1}^{\ka} (g-1-3j)+ \binom{g-1-3\ka}{2} \\
&= \ka(g-1)- 3\binom{\ka+1}{2}+ \binom{g-1-3\ka}{2} \\
&= \frac{g^2-(4\ka+3)g+ (6\ka^2+4\ka+2)}{2};
\end{split}
\]
and consequently, that
\[
W_{\kk}= W(T_1)+ g-1= \binom{g-2\ka}{2}+ \ka^2+ \ka.
\]

See Figure 3 for an illustration of tableaux associated to $\ka$-hyperelliptic semigroups that verify our technical hypothesis when $g=20$ and $\ka=3$.

%\subfile{semigroups/min_and_max_WK}

\begin{figure}

\begin{tikzpicture}[scale=0.30]
%\hspace{90pt}
\draw[blue, very thin] (0,0) rectangle (20,20);
\filldraw[draw=blue, fill=red] (0,3) rectangle (1,4);

\filldraw[draw=blue, fill=red] (0,4) rectangle (1,5);
%\filldraw[draw=blue, fill=lightgray] (1,4) rectangle (2,5);

\filldraw[draw=blue, fill=red] (0,5) rectangle (1,6);
%\filldraw[draw=blue, fill=lightgray] (1,5) rectangle (2,6);
%\filldraw[draw=blue, fill=lightgray] (2,5) rectangle (3,6);

\filldraw[draw=blue, fill=red] (0,6) rectangle (1,7);
\filldraw[draw=blue, fill=red] (1,6) rectangle (2,7);
%\filldraw[draw=blue, fill=lightgray] (2,6) rectangle (3,7);
%\filldraw[draw=blue, fill=lightgray] (3,6) rectangle (4,7);

\filldraw[draw=blue, fill=lightgray] (0,7) rectangle (1,8);
\filldraw[draw=blue, fill=red] (1,7) rectangle (2,8);
%\filldraw[draw=blue, fill=lightgray] (2,7) rectangle (3,8);
%\filldraw[draw=blue, fill=lightgray] (3,7) rectangle (4,8);
%\filldraw[draw=blue, fill=lightgray] (4,7) rectangle (5,8);
%\filldraw[draw=blue, fill=lightgray] (5,7) rectangle (6,8);
%\filldraw[draw=blue, fill=lightgray] (6,7) rectangle (7,8);

\filldraw[draw=blue, fill=lightgray] (0,8) rectangle (1,9);
\filldraw[draw=blue, fill=lightgray] (1,8) rectangle (2,9);

\filldraw[draw=blue, fill=lightgray] (0,9) rectangle (1,10);
\filldraw[draw=blue, fill=lightgray] (1,9) rectangle (2,10);
\filldraw[draw=blue, fill=lightgray] (2,9) rectangle (3,10);

\filldraw[draw=blue, fill=lightgray] (0,10) rectangle (1,11);
\filldraw[draw=blue, fill=lightgray] (1,10) rectangle (2,11);
\filldraw[draw=blue, fill=lightgray] (2,10) rectangle (3,11);
\filldraw[draw=blue, fill=lightgray] (3,10) rectangle (4,11);

\filldraw[draw=blue, fill=lightgray] (0,11) rectangle (1,12);
\filldraw[draw=blue, fill=lightgray] (1,11) rectangle (2,12);
\filldraw[draw=blue, fill=lightgray] (2,11) rectangle (3,12);
\filldraw[draw=blue, fill=lightgray] (3,11) rectangle (4,12);
\filldraw[draw=blue, fill=lightgray] (4,11) rectangle (5,12);

\filldraw[draw=blue, fill=lightgray] (0,12) rectangle (1,13);
\filldraw[draw=blue, fill=lightgray] (1,12) rectangle (2,13);
\filldraw[draw=blue, fill=lightgray] (2,12) rectangle (3,13);
\filldraw[draw=blue, fill=lightgray] (3,12) rectangle (4,13);
\filldraw[draw=blue, fill=lightgray] (4,12) rectangle (5,13);
\filldraw[draw=blue, fill=lightgray] (5,12) rectangle (6,13);

\filldraw[draw=blue, fill=lightgray] (0,13) rectangle (1,14);
\filldraw[draw=blue, fill=lightgray] (1,13) rectangle (2,14);
\filldraw[draw=blue, fill=lightgray] (2,13) rectangle (3,14);
\filldraw[draw=blue, fill=lightgray] (3,13) rectangle (4,14);
\filldraw[draw=blue, fill=lightgray] (4,13) rectangle (5,14);
\filldraw[draw=blue, fill=lightgray] (5,13) rectangle (6,14);
\filldraw[draw=blue, fill=lightgray] (6,13) rectangle (7,14);

\filldraw[draw=blue, fill=lightgray] (0,14) rectangle (1,15);
\filldraw[draw=blue, fill=lightgray] (1,14) rectangle (2,15);
\filldraw[draw=blue, fill=lightgray] (2,14) rectangle (3,15);
\filldraw[draw=blue, fill=lightgray] (3,14) rectangle (4,15);
\filldraw[draw=blue, fill=lightgray] (4,14) rectangle (5,15);
\filldraw[draw=blue, fill=lightgray] (5,14) rectangle (6,15);
\filldraw[draw=blue, fill=lightgray] (6,14) rectangle (7,15);
\filldraw[draw=blue, fill=lightgray] (7,14) rectangle (8,15);

\filldraw[draw=blue, fill=lightgray] (0,15) rectangle (1,16);
\filldraw[draw=blue, fill=lightgray] (1,15) rectangle (2,16);
\filldraw[draw=blue, fill=lightgray] (2,15) rectangle (3,16);
\filldraw[draw=blue, fill=lightgray] (3,15) rectangle (4,16);
\filldraw[draw=blue, fill=lightgray] (4,15) rectangle (5,16);
\filldraw[draw=blue, fill=lightgray] (5,15) rectangle (6,16);
\filldraw[draw=blue, fill=lightgray] (6,15) rectangle (7,16);
\filldraw[draw=blue, fill=lightgray] (7,15) rectangle (8,16);
\filldraw[draw=blue, fill=lightgray] (8,15) rectangle (9,16);

\filldraw[draw=blue, fill=lightgray] (0,16) rectangle (1,17);
\filldraw[draw=blue, fill=lightgray] (1,16) rectangle (2,17);
\filldraw[draw=blue, fill=lightgray] (2,16) rectangle (3,17);
\filldraw[draw=blue, fill=lightgray] (3,16) rectangle (4,17);
\filldraw[draw=blue, fill=lightgray] (4,16) rectangle (5,17);
\filldraw[draw=blue, fill=lightgray] (5,16) rectangle (6,17);
\filldraw[draw=blue, fill=lightgray] (6,16) rectangle (7,17);
\filldraw[draw=blue, fill=lightgray] (7,16) rectangle (8,17);
\filldraw[draw=blue, fill=lightgray] (8,16) rectangle (9,17);
\filldraw[draw=blue, fill=lightgray] (9,16) rectangle (10,17);

\filldraw[draw=blue, fill=lightgray] (0,17) rectangle (1,18);
\filldraw[draw=blue, fill=lightgray] (1,17) rectangle (2,18);
\filldraw[draw=blue, fill=lightgray] (2,17) rectangle (3,18);
\filldraw[draw=blue, fill=lightgray] (3,17) rectangle (4,18);
\filldraw[draw=blue, fill=lightgray] (4,17) rectangle (5,18);
\filldraw[draw=blue, fill=lightgray] (5,17) rectangle (6,18);
\filldraw[draw=blue, fill=lightgray] (6,17) rectangle (7,18);
\filldraw[draw=blue, fill=lightgray] (7,17) rectangle (8,18);
\filldraw[draw=blue, fill=lightgray] (8,17) rectangle (9,18);
\filldraw[draw=blue, fill=lightgray] (9,17) rectangle (10,18);
\filldraw[draw=blue, fill=lightgray] (10,17) rectangle (11,18);

\filldraw[draw=blue, fill=lightgray] (0,18) rectangle (1,19);
\filldraw[draw=blue, fill=lightgray] (1,18) rectangle (2,19);
\filldraw[draw=blue, fill=lightgray] (2,18) rectangle (3,19);
\filldraw[draw=blue, fill=lightgray] (3,18) rectangle (4,19);
\filldraw[draw=blue, fill=lightgray] (4,18) rectangle (5,19);
\filldraw[draw=blue, fill=lightgray] (5,18) rectangle (6,19);
\filldraw[draw=blue, fill=lightgray] (6,18) rectangle (7,19);
\filldraw[draw=blue, fill=lightgray] (7,18) rectangle (8,19);
\filldraw[draw=blue, fill=lightgray] (8,18) rectangle (9,19);
\filldraw[draw=blue, fill=lightgray] (9,18) rectangle (10,19);
\filldraw[draw=blue, fill=lightgray] (10,18) rectangle (11,19);
\filldraw[draw=blue, fill=lightgray] (11,18) rectangle (12,19);

\end{tikzpicture}

\caption{Tableaux $T_1$ associated with weight-minimizing and weight-maximizing $\ka$-hyperelliptic semigroups that verify our technical hypothesis when $g=20$ and $\ka=3$. The (irrelevant) uppermost line is left empty, the minimal $T_1$ is in grey, and the disparity $\ka^2-\ka=6$ between minimal and maximal weight is in red.}
\end{figure}
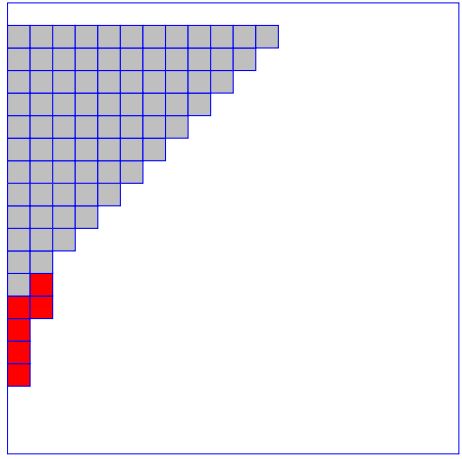

\end{proof}

\begin{rems}
Clearly the technical hypothesis posited in Theorem~\ref{kappa_hyperelliptic_weight} does not hold in general; indeed, Torres proved in \cite[Lem 3.2]{To2} that $W_{\sss}$ achieves the maximum value $\binom{g-2\ka}{2}+ 2\ka^2$ possible if and only if
\[
\sss= \sss_0:= \langle 4, 4\ka+2, 2g-4\ka+1 \rangle,
\]
which violates our technical hypothesis. Indeed, when $\sss= \sss_0$, we can only assert that all odd elements of $\sss \cap \{1,\dots,2g\}$ are strictly greater than the $(g-\ka)$th gap $\ell_{g-\ka} \in G_{\sss}$. Note that the semigroup $\sss_0$ is symmetric, so $W_{\sss}=W_K$ in this case. (In the statement of \cite[Lem 3.2]{To2} there is a misprint; the $4\ka$ that appears there should be replaced by $4\ka+2$.)

\medskip
The significance of our technical hypothesis is that it ensures that $T_1$ stabilizes to a staircase, while $\sss_0$ does not exhibit this behavior-- indeed, its corresponding tableau $T_1$ is symmetric. More to the point, we expect $W_{\kk}$ to be maximized precisely when $\sss=\sss_0$. Nevertheless, the argument we have used to prove Theorem~\ref{kappa_hyperelliptic_weight} is consistent with the general principle that the weight is maximized when the $\ka$ smallest nonzero even elements of $\sss$ are precisely $4j, j=1, \dots, \ka$. So it seems reasonable to expect that an appropriate generalization of the argument we have used will prove Conjecture~\ref{W_K_conj}.
\end{rems}

\section{Linear Series, Gonality and Scrolls}

%As said in the Introduction, our next goal is to describe curves having points of maximal and submaximal weight. In order to do so, we need an intermediary step, which is setting up some definitions, and proving general results which will be useful later on.

Having fixed a choice of definition for the weight of a unibranch singularity, we now take a moment to review the theory of linear series on (irreducible) singular curves. For us, the key point is that linear series on singular curves may admit ``non-removable" base points, in a sense made precise by Coppens \cite{Cp}. Linear series with non-removable base points play a key r\^ole in Rosa and St\"ohr's geometric classification of trigonal Gorenstein curves \cite{SR}, which naturally generalizes the well-known classification of smooth trigonal curves. They also serve to describe pencils on non-Gorenstein curves; see \cite[Ex. 2.2]{AM}.

%More precisely, we have the following.

\subsection{Conventions for linear series on singular curves}
We use $g_{d}^{r}$ as a shorthand for a linear series of degree $d$ and dimension $r$. Here some care is required, as our curves are singular. For our purposes, a \emph{linear series of dimension $r$ in $C$} will denote any set of fractional ideals of the form
$$
\dd =\dd(\aaa ,V):=\{x^{-1}\aaa\ |\ x\in V\setminus 0\}
$$
where $\mathcal{F}\subset\mathcal{K}_C$ is a torsion-free rank one subsheaf of the canonical sheaf of $C$, and $V$ is a vector subspace of $H^{0}(\aaa )$ of dimension $r+1$. The \emph{degree} of $\dd$ is
$$
k:=\deg \aaa :=\chi (\aaa )-\chi (\oo_C)
$$
where $\chi$ is the holomorphic Euler characteristic. In particular, when $\oo_C\subset\aaa$ we have $\deg\aaa=\sum_{P\in C}\dim(\aaa_P/\op)$.

%We use $g_{d}^{r}$ as a shorthand for a linear series of degree $d$ and dimension $r$. 
%The linear system is said to be \emph{complete} if $V=H^0(\aaa)$, in this case one simply writes $\dd=|\aaa|$. The \emph{gonality} of $C$ is the
%smallest $d$ for which there exists a $g_{d}^{1}$ in $C$, or equivalently, a torsion free sheaf $\aaa$ of rank $1$ on $C$ with degree $d$ and $h^0(\aaa)\geq 2$. 
A point $P\in C$ is called a \emph{base point of} a linear series $\dd=\dd(\aaa ,V)$ if $x\op\subsetneq\aaa_P$ for every $x\in V$. A base point is called \emph{removable} if $\dd(\oo\langle V\rangle,V)$ has no base points, where $\oo\langle V\rangle$ is the sheaf generated by the sections in $V$ (this notion is due to Coppens; see \cite{Cp}). So $P$ is a non-removable base point of $\dd$ if and only if the stalk $\oo_C\langle V\rangle_P$ is not a free $\op$-module; if so, $P$ is necessarily singular. 

\subsection{Gonality of singular curves}
An integral and projective curve $C$ is said to be \emph{hyperelliptic} whenever a degree-$2$ morphism $C\to\pum$ exists. Note that this definition applies irrespective of whether $C$ is nonsingular or not. %involves just basic concepts, i.e., the notion of a \emph{projective morphism}, and the one of \emph{degree} (generic number of points in the pre-image). 
%Since hyperellipticity will be analyzed in many different ways, it is better to start using terms which apply in the most general context, no matter if the curve is singular or not; see, for instance, \cite{St} and \cite[Def. 2.4]{KM}. 
However, the latter definition is {\it not}, in the general case, equivalent to stipulating that the curve carry a $g_2^1$. Rather, as will be spelled out in Theorem \ref{thmhyp}, for the usual equivalence between these two characterizations to hold, we should insist that the $g_2^1$ be base-point-free.
%as will we see in Theorem \ref{thmhyp}.(i)$\Leftrightarrow$(v). 
Curves carrying a $g_2^1$ with a non-removable base point were characterized in \cite[Thms. 3.4, 5.10]{KM} as rational nearly normal. More generally, we have the following.

\begin{lemma}
\label{lemmsl}
There exists a morphism $C\to\mathbb{P}^1$ of degree $k$ if and only if $C$ carries a base-point-free $g_k^1$.
\end{lemma}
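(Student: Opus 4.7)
The plan is to establish the classical correspondence between base-point-free pencils and morphisms to $\mathbb{P}^1$, adapted to the singular setting of the paper where a linear series is encoded by a pair $(\mathcal{F},V)$ with $\mathcal{F}$ a torsion-free fractional ideal and $V \subset H^{0}(\mathcal{F})$ a subspace.

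For the ``if'' direction, I would start with a base-point-free $g_{k}^{1} = \mathcal{L}(\mathcal{F},V)$ and pick a basis $V = \langle x_{0},x_{1}\rangle$. The base-point-freeness says that at every $P$ some $x \in V$ satisfies $x\mathcal{O}_{P} = \mathcal{F}_{P}$; in particular $\mathcal{F}_{P}$ is free of rank one, so $\mathcal{F}$ is an invertible sheaf. Near any $P$, choose a local generator $t$ of $\mathcal{F}$ and write $x_{i} = a_{i}t$; at least one $a_{i}$ is a unit at $P$, so the rule $P \mapsto (x_{0}(P):x_{1}(P)) = (a_{0}(P):a_{1}(P))$ extends to a well-defined morphism $\varphi : C \to \mathbb{P}^{1}$. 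Its degree equals $\deg \mathcal{F} = k$, computed by pulling back a general point of $\mathbb{P}^{1}$.

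For the converse, given $\varphi : C \to \mathbb{P}^{1}$ of degree $k$, I would set $\mathcal{F} := \varphi^{*}\mathcal{O}_{\mathbb{P}^{1}}(1)$ and let $V \subset H^{0}(\mathcal{F})$ be spanned by the pullbacks of the two homogeneous coordinates on $\mathbb{P}^{1}$. Then $\mathcal{F}$ is invertible of degree $k$ (the scheme-theoretic preimage of a point is a divisor of degree $k$), and the two sections in $V$ have no common zero since $\varphi$ is everywhere defined; hence $\mathcal{L}(\mathcal{F},V)$ is a base-point-free $g_{k}^{1}$. To match the paper's convention that $\mathcal{F} \subset \mathcal{K}_{C}$, I would divide by any nonzero $s \in V$: since $C$ is integral, the map $t \mapsto t/s$ realises $\mathcal{F}$ as a fractional ideal inside $\mathcal{K}_{C}$.

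The only step requiring any care is this last identification of $\mathcal{F}$ with a subsheaf of $\mathcal{K}_{C}$, but this is essentially cosmetic. I do not anticipate any genuine obstacle, since on a singular integral curve base-point-freeness still amounts to local invertibility of $\mathcal{F}$ together with nonvanishing of some section of $V$ in each stalk, which is exactly the condition needed for the recipe $P \mapsto (x_{0}(P):x_{1}(P))$ to define a morphism.
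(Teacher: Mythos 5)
Your proposal is correct and follows essentially the same route as the paper: both directions rest on the observation that $\varphi=(x_0,x_1)$ is a morphism at $Q$ precisely when the stalk $\oo_C\langle x_0,x_1\rangle_Q$ is generated by one of the $x_i$, i.e.\ when the sheaf generated by $V$ is invertible and $V$ generates it everywhere, which is the paper's definition of base-point-freeness; the degree identity $\deg(\varphi)=\deg(\mathcal{F})$ then closes the argument. The only cosmetic difference is that you phrase the converse via $\varphi^*\mathcal{O}_{\mathbb{P}^1}(1)$ and then re-embed in $\mathcal{K}_C$, whereas the paper works with the fractional ideal $\oo_C\langle x_0,x_1\rangle$ from the start.
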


\begin{proof}
There exists a morphism $\phi:=(x_0,x_1):C\to\pum$ if and only if for each $Q\in C$ there is some $i(Q)\in\{0,1\}$ such that $x_i/x_{i(Q)}\in\oo_Q$ for $i=0,1$. Equivalently, $x_0\oo_Q+x_1\oo_Q=x_{i(Q)}\oo_Q$ for each $Q\in C$. That is, $\aaa:=\oo_C\langle x_0,x_1\rangle$ is invertible, which is the same as saying that $\mathcal{L}(\aaa,\langle x_0,x_1\rangle)$ is base-point-free. Since $\deg(\phi)=\deg(\aaa)$, the equivalence follows.
\end{proof}

We call the \emph{gonality} of $C$, and denote by ${\rm gon}(C)$, the smallest $k$ for which $C$ carries a $g_k^1$.

\begin{thm}\label{gonality_bound}
Let $C$ be an integral and projectice curve of genus $g$ over an algebraically closed field. Then 
$$
{\rm gon}(C)\leq g+1
$$
and if the bound is attained then $C$ is Gorenstein, and its normalization is either $\mathbb{P}^1$, i.e., $C$ is rational, or an elliptic curve.
\end{thm}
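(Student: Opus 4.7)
The plan is to first establish the bound $\mathrm{gon}(C) \leq g+1$ by a standard Riemann--Roch argument, and then to derive the sharper inequality $\mathrm{gon}(C) \leq g$ whenever $g \geq 2$; the equality case will then be confined to $g \in \{0,1\}$, where the Gorenstein plus normalization conclusion becomes automatic. For the upper bound I would pick any smooth point $P \in C$ and apply Riemann--Roch to the invertible sheaf $\mathcal{L}:=\mathcal{O}_C((g+1)P)$: one finds $\chi(\mathcal{L}) = (g+1)+1-g = 2$, so $h^0(\mathcal{L}) \geq 2$, and any two-dimensional subspace of $H^0(\mathcal{L})$ cuts out a $g^1_{g+1}$ on $C$ in the sense of Section 2.1.

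To obtain $\mathrm{gon}(C) \leq g$ when $g \geq 2$, I would use the dualizing sheaf $\omega_C$, which by Serre duality satisfies $h^0(\omega_C) = g$ and $\deg \omega_C = 2g-2$. I would then choose a general effective Cartier divisor $D = Q_1 + \cdots + Q_{g-2}$ supported in the smooth locus of $C$. The exact sequence
\[
0 \to \omega_C(-D) \to \omega_C \to \omega_C|_D \to 0,
\]
combined with the fact that $\omega_C|_D$ is a skyscraper sheaf of length $g-2$, yields $h^0(\omega_C(-D)) \geq g-(g-2) = 2$. Since $\omega_C(-D)$ is a rank-1 torsion-free sheaf of degree $2g-2-(g-2) = g$, any pencil inside $H^0(\omega_C(-D))$ defines a $g^1_g$ on $C$, and so $\mathrm{gon}(C) \leq g < g+1$.

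Granting this inequality, the equality $\mathrm{gon}(C) = g+1$ forces $g \leq 1$. For $g = 0$, the decomposition $g = \bar g + \delta$ gives $\bar g = \delta = 0$, so $C = \mathbb{P}^1$ is smooth and Gorenstein with normalization $\mathbb{P}^1$. For $g=1$, either $C$ is smooth (an elliptic curve) or $C$ is rational with a single singularity of $\delta$-invariant one, which must be a node or an ordinary cusp; in either subcase $C$ is Gorenstein and its normalization is either $\mathbb{P}^1$ or an elliptic curve. The full conclusion follows.

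The main technical obstacle will be verifying that $g-2$ general smooth points of $C$ impose $g-2$ independent linear conditions on $H^0(\omega_C)$, equivalently that no smooth point of $C$ is a base point of the canonical linear system $|\omega_C|$ when $g \geq 1$. I would reduce this via Serre duality to the assertion that $h^0(\mathcal{O}_C(P))=1$ for every smooth $P$: any strict inequality would provide a non-constant rational function with a single simple pole at $P$, hence a degree-$1$ morphism $C \to \mathbb{P}^1$, forcing $C \cong \mathbb{P}^1$ and contradicting $g \geq 1$. An inductive removal of one smooth point at a time then establishes the independence of the $g-2$ conditions.
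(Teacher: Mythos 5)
Your argument is correct, and it takes a genuinely different route from the paper's, whose ``proof'' is not self-contained: the authors simply direct the reader to follow the proof of \cite[Thm. 3.(i)]{CFM} and to extract the Gorenstein conclusion from the vanishing of the local quotients $\omega_P/\mathcal{O}_P$. Your route rests only on the two global facts $h^0(\omega_C)=g$ and $\deg\omega_C=2g-2$, valid for any integral projective curve by Rosenlicht--Serre duality, together with the observation that twisting $\omega_C$ down by $g-2$ smooth points yields a torsion-free rank-one sheaf of degree $g$ with at least two sections. Since the gonality used in this paper is computed by arbitrary $g^1_k$'s (pencils with non-removable base points are allowed), this really does give ${\rm gon}(C)\leq g$ for every $g\geq 2$; hence equality in ${\rm gon}(C)\leq g+1$ forces $g\leq 1$, where the Gorenstein and normalization conclusions are immediate from $g=\bar g+\delta$. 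What your approach buys is a short, self-contained argument and a sharper statement (the bound is attained only in genus $\leq 1$); what the paper's citation presumably buys is the finer local analysis via $\omega_P/\mathcal{O}_P$ that pins down exactly which sheaves compute the gonality. Note, however, that the bound would be badly false if gonality were measured by base-point-free pencils only (a unibranch rational curve with semigroup $\{0,g+1,g+2,\dots\}$ admits no morphism to $\mathbb{P}^1$ of degree less than $g+1$, yet carries a $g^1_2$ with a non-removable base point), so your proof -- like the theorem itself -- depends essentially on the convention of Section 2.

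One correction of emphasis: the ``main technical obstacle'' you flag at the end is not an obstacle at all. You only need $h^0(\omega_C(-D))\geq 2$, and this already follows from left-exactness of global sections applied to $0\to\omega_C(-D)\to\omega_C\to\omega_C|_D\to 0$, with no genericity of the points $Q_i$ and no independence of the conditions they impose; independence would be needed only for the reverse inequality $h^0(\omega_C(-D))\leq 2$, which plays no role here. The final paragraph of your proposal can simply be deleted.
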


\begin{proof}
Follow the proof of \cite[Thm. 3.(i)]{CFM} and use the fact that $C$ is Gorenstein if and only if all of the local quotients $\omega_P/\mc{O}_P$ (as $P$ varies over points in $C$) are zero.
\end{proof}

For later use, we recall the following definition/notation: the $r$-fold scroll $S=S_{m_1m_2\ldots m_r}$ is the set of points $S=\{(x_0:\ldots: x_n)\}\subset\mathbb{P}^n$ such that the rank of the matrix
$$
\left[
      \begin{array}{cccc}
        x_0\, \ldots\, x_{m_1-1} &  x_{m_1+1}\,\ldots \, x_{m_1+m_2} & \ldots & x_{m_1+\ldots+m_{r-1}+r-1}\, \ldots\, x_{n-1}\\
        x_1\,   \ldots\,  x_{m_1}  &  \ \ \ x_{m_1+2}\, \ldots\,  x_{m_1+m_2+1}&\ldots & x_{m_1+\ldots+m_{r-1}+r}\,\ldots\,x_n
\end{array}\right]
$$
is less than 2.

\

For the remainder of this article, a \emph{curve} $C$ will be again a rational curve with a unique singular point $P$ which is unibranch. Let $\pb\in \cb=\pum$ denote the preimage of $P$ under the normalization map. The semigroup of $P$ is then $\sss=\vpb(\op)$. %We have that the nonsingular model of any such a curve is $\cb=\pum$. 
 %Clearly $\sss=\vpb(\op)$, the evaluation set of the local ring. The set of gaps $\nn\setminus\sss$ is thus of cardinality equal to the (arithmetic) genus $g$ of $C$. 
Let
$$
m:=m_C(P)
$$
denote the multiplicity of $C$ at $P$. 

\begin{conv}
\label{cnvcnv}
\emph{Write $K(C)=\C(t)$ and $\pum=\C\cup\{\infty\}$ so that $t$ is the identity function at finite distance, and $\pb=0$. In particular, $t$ is a local parameter at $\pb$. Then write $C=\{P\}\cup(\pum\setminus \{0\})$, i.e., we identify the regular points of $C$ with their pre-images under the normalization map. }
\end{conv}

With this convention, we obtain the following characterization of curves (in the sense of this section) equipped with base-point-free pencils.

%The next result is an attempt of obtaining linear systems of smallest possible degrees, with or without non-removable base points.

%A curve $C$ is said \emph{hyperelliptic} if there exists a morphism $C\to\pum$ of degree $2$. Actually, the term was originally introduced by M. Rosenlicht in \cite[pg. 188]{R}, who called ``quasihyperelliptic" a curve for which the canonical morphism exists but is not an embedding. He proved that any such a curve is characterized as being a double cover of the projective line. So this became the property adopted in the literature nowadays while the prefix ``quasi" was removed.  

\begin{lemma}
\label{lemgk1}
$C$ carries a base point free $g_k^1$ if and only if there exist $f,h\in\C[t]$ with no common factor and $f(0)=0$, such that $f/h\in\op$, and
$$
k=\deg(h)+{\rm max}\{0,\deg(f)-\deg(h)\}
$$
\end{lemma}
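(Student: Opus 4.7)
The plan is to translate via Lemma~\ref{lemmsl}: base-point-free $g_k^1$'s on $C$ are the same as morphisms $\phi : C \to \mathbb{P}^1$ of degree $k$, and the polynomial pair $(f,h)$ will be extracted from the pullback of $\phi$ along the normalization $\pi : \bar C = \mathbb{P}^1 \to C$.

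For $(\Rightarrow)$, start from such a morphism $\phi$. After composing with an automorphism of the target $\mathbb{P}^1$, I may assume $\phi(P)=0$. Since $\pi$ is birational, $\phi\circ\pi : \mathbb{P}^1 \to \mathbb{P}^1$ is a morphism of the same degree $k$, and is therefore expressible (uniquely up to scalar) as $(f:h)$ for coprime $f,h\in \mathbb{C}[t]$. Evaluating at $\bar P=0$ gives $(f(0):h(0))=(0:1)$, so $f(0)=0$ and $h(0)\neq 0$. Regularity of $\phi$ at $P$ is equivalent to the condition that $f/h$, being the pullback of the local parameter of $\mathbb{P}^1$ at $0$, lies in $\mathcal{O}_P$. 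Finally, a morphism $\mathbb{P}^1\to\mathbb{P}^1$ given in coprime form by $(f:h)$ has degree $\max\{\deg f,\deg h\}=\deg h + \max\{0,\deg f-\deg h\}$, which is the asserted formula.

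For the reverse direction $(\Leftarrow)$, I would take $f,h$ satisfying the hypothesis and form $\mathcal{F}:=\mathcal{O}_C\cdot f + \mathcal{O}_C\cdot h \subset \mathcal{K}_C$, together with $V:=\langle f,h\rangle \subset H^0(\mathcal{F})$; the pair $(\mathcal{F},V)$ is the desired $g_k^1$. At $P$, $f/h\in\mathcal{O}_P$ forces $\mathcal{F}_P=h\cdot \mathcal{O}_P$, a rank-one free module generated by the section $h\in V$, so $P$ is not a base point. At any smooth point $Q$ of $C$ at finite distance, coprimality of $f$ and $h$ prevents a common zero, so one of the two sections generates $\mathcal{F}_Q=\mathcal{O}_Q$; at $\infty$ the polynomial of larger degree generates the stalk, and base-point-freeness follows. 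For the degree, I would twist by $h^{-1}$ so that $\mathcal{F}':=h^{-1}\mathcal{F}=\mathcal{O}_C+\mathcal{O}_C\cdot (f/h)$ contains $\mathcal{O}_C$, and then use $\deg\mathcal{F}=\deg\mathcal{F}'=\sum_Q \dim(\mathcal{F}'_Q/\mathcal{O}_Q)$: the zeros of $h$ in $\mathbb{C}^*$ contribute $\deg h$ in total (since $h(0)\neq 0$, none lies at $P$), while $\infty$ contributes $\max\{0,\deg f-\deg h\}$.

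The main subtlety I anticipate is the degree calculation: neither $f$ nor $h$ need belong to $\mathcal{O}_P$ individually, so the identification of $\deg \mathcal{F}$ with a sum of local colengths against $\mathcal{O}_C$ requires first twisting by $h^{-1}$; once that is done, the computation reduces to standard zero-counting for $h$ on the smooth locus together with a local analysis at $\infty$ of the rational function $f/h$.
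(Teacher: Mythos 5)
Your proposal is correct and follows essentially the same route as the paper: both reduce a base-point-free pencil to a coprime pair $(f,h)$ with $f/h\in\mathcal{O}_P$ and $f(0)=0$, and both obtain the degree from the local colengths of $\mathcal{O}_C\langle 1,f/h\rangle$ over $\mathcal{O}_C$ (the zeros of $h$ contributing $\deg h$, plus $\max\{0,\deg f-\deg h\}$ at $\infty$). The only cosmetic difference is that in the forward direction you shortcut this computation by combining Lemma~\ref{lemmsl} with the fact that a coprime pair defines a self-map of $\mathbb{P}^1$ of degree $\max\{\deg f,\deg h\}$, whereas the paper runs the local colength calculation once and reads off both directions from it.
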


\begin{proof}
Any base point free $g_k^1$ in $C$ can be given by $\mathcal{L}(\oo_C\langle 1,f/h\rangle,\langle 1,f/h\rangle)$ where $f,h\in\C[t]$, $f/h\in\oo_P$, and we may take $f$ and $h$ with no common factors and, subtracting a suitable constant, we may further assume that $f(0)=0$. Setting $\aaa:=\oo_C\langle 1,f/h\rangle$ we have that the degree of $\mathcal{L}$ is $k=\deg(\aaa)$. Now
$$
\deg(\aaa)=\sum_{Q\in C}\ell({\aaa}_Q/\oo_Q)
$$
%the sum of local quotient lenghts, 
where
$$
\ell({\aaa}_Q/\oo_Q)=\dim_\C ((\oo_Q+f/h\cdot\oo_Q)/\oo_Q).
$$
Write $h=(t-c_1)^{m_1}\ldots(t-c_l)^{m_l}$ with $c_i\in\C$. Then
$$
\ell({\aaa}_Q/\op)=
\begin{cases}
0 & \text{if}\ Q=P \\
m_i & \text{if}\ Q=c_i \\
0 & \text{if}\ Q\in \C\setminus\{0,c_1,\ldots,c_l\} \\
{\rm max}\{0,\deg(f)-\deg(h)\} & \text{if}\ Q=\infty
\end{cases}
$$
The result follows since $\sum_{i=1}^l m_i= \deg(h)$.
%$m_1+\ldots+m_l=\deg(h)$.
\end{proof}

\begin{thm}
\label{lemgfb}
Let $C$ denote a curve (subject to the standing hypotheses of this section) that is the image of a morphism $f=(f_0,\dots,f_n): \mb{P}^1 \ra \mathbb{P}^n$, with $f_i\in\C[t]$ for all $i$. Normalize $f$ by letting
\begin{align*}
f_0&=1+a_rt^r+a_{r+1}t^{r+1}+\ldots, \text{ and }\\
f_1 &=t^m+b_{m+s}t^{m+s}+b_{m+s+1}t^{m+s+1}+\ldots.
\end{align*}
\begin{itemize}
\item[(i)] Assume $m=2$ and that one of the following conditions holds:
\begin{itemize}
\item[] $r$ is odd, and either $r<s$ or $s=0$;
\item[] $s$ is odd, and either $s<r$ or $r=0$; 
\item[] $r=s$, both are odd, and $a_r\neq a_{m+s}$.
\end{itemize}
Then $C$ carries a $g_k^1$ with a non-removable base point where
$$
k=2+g-\frac{{\rm min}\{r,s\}+1}{2}.
$$
\item[(ii)] Write $f_0=(t-c_1)^{m_1}\ldots(t-c_l)^{m_l}$ and, %$f_i=(t-c_1)^{m_{1,i}}\ldots(t-c_l)^{m_{l,i}}h_i(t)$ for $1\leq i\leq n$, with $h_i(c_j)\neq 0$ for $1\leq j\leq l$. Similarly, 
for $f\in V:=\langle f_1,\ldots,f_n\rangle\subset \mathbb{C}[t]$, write 
$$
f=(t-c_1)^{m_{1,f}}\ldots(t-c_l)^{m_{l,f}}h_f(t)
$$
with $h_f(c_j)\neq 0$ for $1\leq j\leq l$. Set
$m'_j:={\rm min}\{m_{j,i}\}_{i=1}^n$, $d':={\rm max}\{\deg(f_i)\}_{i=1}^n$,
and define the vector space
$$
U:=\bigg\{ f\in V\ 
\bigg{|}
\begin{array}{cl}
\deg(f)\leq d'+\deg(f_1)-\deg(f_0) &\text{if} \ \deg(f_0)<\min(\deg(f_1),\deg(f))  \\ 
 m_{j,f}\geq m_j'+m_j-m_{j,i} &\text{if} \ m_j>\max(m_i,m_{j,f})
\end{array}
\ \
 \bigg\}.
$$
Then $C$ lies on a scroll $S\subset\mathbb{P}^n$ of codimension $\dim(U)$.
\end{itemize}
\end{thm}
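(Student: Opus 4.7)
The plan for part (i) is to produce, in each of the three subcases, an explicit rational function $u \in \obp \setminus \op$ with $v_{\pb}(u) = 1$, and to form the pencil $\dd(\op\langle 1,u\rangle, \langle 1,u\rangle)$. A natural choice in the first subcase ($r$ odd with $r < s$ or $s = 0$) is
\[
u := \frac{f_0 - 1}{f_1^{(r-1)/2}},
\]
which satisfies $v_{\pb}(u) = r - 2\cdot(r-1)/2 = 1$. Since $m = 2$ forces $1 \notin \sss$, we have $u \in \obp \setminus \op$ and also $1/u \notin \op$, so $\op + u\op$ is not a cyclic $\op$-module and $P$ is a non-removable base point of the pencil. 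The second subcase is handled by a parallel construction exchanging the roles of $f_0 - 1$ and $f_1 - t^m \cdot (\text{rational unit})$, while the third subcase ($r = s$ with $a_r \neq a_{m+s}$) is handled by a linear combination of $f_0 - 1$ and $t^{r-m} f_1$ arranged so that the $t^r$-terms cancel, once again producing $v_{\pb}(u) = 1$ after dividing by an appropriate power of $f_1$.

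To compute the degree I would use $k = \ell((\op + u\op)/\op) + \sum_{Q\neq P}\max(0,-v_Q(u))$. The local term at $P$ is $\#((1+\sss)\setminus\sss)$, a quantity determined by the semigroup and controlled by the hyperelliptic structure; the second sum is the total pole order of $u$ on $\pum \setminus \{\pb\}$, computable from the factorization of $u$ after cancelling the common $t^{r-1}$ in numerator and denominator. The formula $k = 2 + g - (\min\{r,s\}+1)/2$ should emerge after this arithmetic is carried out and the ancillary terms coming from $\deg(f_0), \deg(f_1)$ are collapsed using minimality of the embedding. The main bookkeeping hurdle will be tracking which higher-order coefficients of $f_0$ and $f_1$ actually contribute and which are absorbed into the cancellation $t^r/t^{r-1}$.

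For part (ii), the strategy is to attach to each $f \in U$ an explicit $2 \times 2$ minor of the scroll matrix that vanishes along $C$. Concretely, the degree inequality $\deg(f) \leq d' + \deg(f_1) - \deg(f_0)$ together with the multiplicity inequalities $m_{j,f} \geq m'_j + m_j - m_{j,i}$ are precisely the divisibility constraints that allow a candidate rank-$1$ column pair built from $f$ and some $f_i$ to have all its entries divisible by $f_0$ and by the appropriate $(t-c_j)$-powers. Assembling $\dim U$ linearly independent relations of this form cuts out a scroll $S \subset \pp^n$ of codimension $\dim U$ containing $C$. The main obstacle for part (ii) will be the converse: showing that no additional scroll relation exists on $C$, equivalently that any such relation originates from some $f \in V$ satisfying the constraints defining $U$. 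I expect this to follow from a dimension count matching the $\mathbb{C}$-span of admissible rank-$1$ column pairs against $V$ via the combinatorics of the numerical data $(m_j, m_{j,i}, m'_j, d')$.
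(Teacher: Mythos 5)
There is a genuine gap in part (i), and it is fatal to the degree formula. Since $m=2$, the semigroup $\sss=\vpb(\op)$ is hyperelliptic, so its $g$ gaps are exactly the odd numbers $1,3,\dots,2g-1$. Your element $u$ has $\vpb(u)=1$, so the stalk $\aaa_P=\op+u\,\op$ has value set containing $\sss\cup(1+\sss)=\nn$; hence the local contribution to the degree is already $\ell(\aaa_P/\op)=g$ (you compute this yourself as $\#((1+\sss)\setminus\sss)$, which equals $g$, not a quantity depending on $r,s$). Since $u$ is nonconstant it also has poles away from $\pb$, so $k\geq g+1$, and in fact $k\geq g+\deg(f_0)$ or worse in your first subcase; this exceeds the asserted $2+g-(\min\{r,s\}+1)/2$ as soon as $\min\{r,s\}\geq 3$, and generically even when $\min\{r,s\}=1$. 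The role of the parity hypothesis is different from what you use it for: the paper's pencil is generated by $1$ and $t^2$ itself. Writing $x:=f_1/f_0=t^2+(\mathrm{const})\,t^{a}+\dots$ with $a=2+\min\{r,s\}$ odd (the condition $a_r\neq b_{2+s}$ preventing cancellation when $r=s$), one has $t^2-c\,x$ of valuation exactly $a$ for suitable $c$, whence $\vpb(\aaa_P)=\{0,2,\dots,a-1,\to\}$: only the odd gaps $\geq a$ are filled, giving local length $g-(a-1)/2$, and $t^2$ contributes exactly $2$ at infinity --- which is precisely where $k=2+g-(\min\{r,s\}+1)/2$ comes from. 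Non-removability then follows because $\{0,2,\dots,a-1,\to\}$ is not a translate of $\sss$; note that your own non-removability argument (``$u\notin\op$ and $1/u\notin\op$, hence $\op+u\,\op$ is not cyclic'') is incomplete even on its own terms, since a free generator of $\aaa_P$ need not be $1$ or $u$.

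Part (ii) of your proposal is in the right spirit but stops short of the actual mechanism. The paper (adapting Schreyer) writes $\oo_C(1)=\oo_C(H)$ and $\oo_C\langle 1,f_1/f_0\rangle=\oo_C(D)$ for explicit divisors $H,D$ on $\pum$, and the scroll is the determinantal locus of the single $2\times h^0(\oo_C(H-D))$ matrix of linear forms recording the multiplication map $\langle 1,f_1/f_0\rangle\otimes H^0(\oo_C(H-D))\to H^0(\oo_C(H))$; its codimension is $h^0(\oo_C(H-D))-1$, and the divisibility conditions defining $U$ are exactly the statement that $H^0(\oo_C(H-D))=\C\oplus(U/f_0)$. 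Merely ``assembling $\dim U$ linearly independent relations'' does not by itself yield a scroll of codimension $\dim U$ (a codimension-$c$ rational normal scroll is cut out by $\binom{c+1}{2}$ minors, and the codimension claim requires the determinantal structure); and the converse you propose to prove --- that no further scroll relations exist --- is not part of the statement, which only asserts the existence of a containing scroll of the given codimension.
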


\begin{proof}
To prove (i), consider the power series 
\begin{equation}
\label{equlau}
x:=f_{1}/f_0=
\begin{cases}
t^2-a_rt^{r+2}+\ldots & \text{if}\ r<s\\
t^2+b_{2+s}t^{2+s}+\ldots  & \text{if}\ s<r \\
t^2+(b_{2+s}-a_r)t^{r}+\ldots  & \text{if }r=s
\end{cases}
\end{equation}
and let $\aaa:=\oo_C\langle 1,t^2\rangle$. We claim the linear series $\mathcal{L}(\aaa,\langle 1,t^2\rangle)$ is a $g_k^1$ which has $P$ as a non-removable base point. To see this, let $\pb$ be the preimage of $P$ under the normalization $\cb\to C$ and set $a:=2+{\rm min}\{r,s\}$. We have $y:=x-ct^2\in\aaa_P$ for any $c\in k$ and, by (\ref{equlau}), we may choose $c$ such that $y=bt^a+\ldots$ for some nonzero $b\in k$; that is, such that $a\in \vpb(\aaa_P)$. But $\aaa_P$ is an $\op$-module, so $x^i y\in \aaa_P$ for any $i\geq 0$, which implies that $a+2i\in\vpb(\aaa_P)$ for any $i\geq 0$. Therefore we have
\begin{equation}
\label{equvf1}
\vpb(\aaa_P)=\{0,2,4,\ldots,a-1,\to\}.
\end{equation}
From \eqref{equvf1}, we see there is no $q\in\mathbb{Z}$ for which $\vpb(\aaa_P)=\sss+q$ and thus, there is no function $z\in k(C)$ for which $\aaa_P=z\cdot\op$. That is, $\aaa_P$ is not free, i.e., $P$ is a non-removable base point of $\mathcal{L}$. 

\medskip
In order to compute the degree of $\mathcal{L}$ we express the set $\vpb(\aaa_P)$ in a different way. Namely, setting $e:=g-(a+1)/2$ we have
$$
\vpb(\aaa_P)=\sss\cup\{a,a+2,a+4,\ldots,a+2e\}.
$$
On the other hand,
$$
\ell(\aaa_P/\op)=\#(\vpb(\aaa_P)\setminus\sss)=e+1
$$
and we have
$$
\ell(\aaa_Q/\op)=
\begin{cases}
e+1 & \text{if}\ Q=P \\
0 & \text{if}\ Q\in k\setminus\{0\} \\
2 & \text{if}\ Q=\infty
\end{cases}
$$
so the item follows.

\medskip
To prove (ii), we adapt an argument of Schreyer's \cite[(2.2), p.113]{Sc}. Consider the very ample sheaf $\oo_C(1)=\oo_C\langle 1,x_1,\ldots,x_n\rangle$, where $x_i:=f_i/f_0$. Since $\oo_C(1)_P=\op$ and since we identify smooth points of $C$ with their preimages, we may write $\oo_C(1)=\oo_C(H)$ where $H$ is the Weil divisor on $\pum$ given by
\[
H=\left(\sum_{j=1}^l {\rm max}\{0,m_j-m'\}\cdot c_j\right)+{\rm max}\{0,d'-\deg(f_0)\}\cdot\infty.
\]
Now consider the sheaf $\aaa:=\oo_C\langle 1,f_0/f_1\rangle$, which defines a $g_k^1$ on $C$ as in Lemma~\ref{lemgk1}. We may write $\aaa:=\oo_C(D)$ where
\[
D:=\left(\sum_{j=1}^l {\rm max}\{0,m_j-m_{1,j}\}\cdot c_j\right)+{\rm max}\{0,\deg(f_1)-\deg(f_0)\}\cdot\infty.
\]
%Set $G:=\langle 1,f_1/f_0\rangle$, and 
Finally, set $G:=\langle 1,f_1/f_0\rangle\subset H^0(\oo_C(D))$, and consider the multiplication map
\begin{equation}\label{multiplication_map}
G\otimes H^0(\oo_C(H-D))\longrightarrow H^0(\oo_C(H)).
\end{equation}

The map \eqref{multiplication_map} is given by a $2\times h^0(\oo_C(H-D))$ matrix whose minors cut out a scroll $S\subset\mathbb{P}^n$ for which
\[
{\rm codim}(S)=h^0(\oo_C(H-D))-1.
\]
Now $\C\subset H^0(\oo_C(H-D))$ and it is not hard to see that 
\[
U=\{f\in V\ |\ f/f_0\in H^0(\oo_C(H-D))\}.
\]
Accordingly, we have
\[
H^0(\oo_C(H-D))=\C \oplus (U/f_0)
\]
and the claim follows.
\end{proof}

\section{Curves with points of nearly-maximal weight}

\subsection{The maximal case}

%Compared to the theory of smooth curves, the lacking equivalent sentence in the statement of the theorem above is ``$C$ is hyperelliptic". 
\noindent Given the classical equivalence between hyperelliptic (smooth) curves and curves with hyperelliptic Weierstrass semigroups, it is natural to wonder whether this characterization extends to our setting as well. This is not true in general, as we shall see. We begin by characterizing (globally) hyperelliptic singular rational curves.

\begin{thm}
\label{thmhyp}
Let  $C$ be a curve of genus $g$. Then the following are equivalent:
\begin{itemize}
\item[(i)] $C$ is hyperelliptic;
\item[(ii)] $C$ carries a base point free $g_2^1$;
\item[(iii)] $t^2/h\in\oo_{P}$ for some $h\in\C[t]$ with $h(0)\neq 0$ and $\deg(h)\leq 2$;
\item[(iv)] $C$ is Gorenstein and isomorphic to a curve of degree $2g+1$ in $\mathbb{P}^{g+1}$ lying on the cone $S_{0,g}$;
\item[(v)] $C$ is Gorenstein and ${\rm gon}(C)=2$.
\end{itemize}
Here $t$ is an inhomogeneous local parameter for the normalization of $C$, centered at the preimage $\overline{P}$ of the unique singular point $P \in C$, as in Convention~\ref{cnvcnv}.
\end{thm}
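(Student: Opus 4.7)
The first two equivalences are direct consequences of earlier results: (i)$\Leftrightarrow$(ii) is Lemma~\ref{lemmsl}, and (ii)$\Leftrightarrow$(iii) follows from Lemma~\ref{lemgk1} applied with $k=2$. For the only nontrivial direction (ii)$\Rightarrow$(iii), Lemma~\ref{lemgk1} produces $f,h\in\C[t]$ with $f(0)=0$, $\gcd(f,h)=1$, $f/h\in\op$ and $\deg h+\max\{0,\deg f-\deg h\}=2$. Since $P$ is singular ($g\geq 1$, so $1\notin\sss$), we have $\vpb(f/h)\geq 2$, which forces $f$ to have a double root at $0$; after rescaling, $f=t^2$ and $\deg h\leq 2$ with $h(0)\neq 0$.

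I would close the cycle via (iii)$\Rightarrow$(iv)$\Rightarrow$(v)$\Rightarrow$(ii). For (iii)$\Rightarrow$(iv), the identity $\vpb(t^2/h)=2$ (using $h(0)\neq 0$) gives $2\in\sss$; since $\sss$ has genus $g$ and contains $2$, it must be the symmetric hyperelliptic semigroup $\{0,2,4,\ldots,2g\}\cup\{n\geq 2g+1\}$, so $P$ is Gorenstein. The functions $1,t^2,t^4,\ldots,t^{2g},t^{2g+1}$ all lie in $\op$ and generate it as a ring; hence
\[
\varphi\colon t\longmapsto(1:t^2:t^4:\cdots:t^{2g}:t^{2g+1})
\]
is a closed immersion of $C$ into $\mathbb{P}^{g+1}$ as a curve of degree $2g+1$ (injectivity follows from $\gcd(2,2g+1)=1$). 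The image satisfies the quadratic relations $x_ix_j=x_kx_l$ for $0\leq i,j,k,l\leq g$ with $i+j=k+l$, which are precisely the $2\times 2$ minors cutting out $S_{0,g}$; since these relations do not involve the coordinate $x_{g+1}=t^{2g+1}$, the scroll is indeed a cone in the sense of the paper.

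For (iv)$\Rightarrow$(v), Gorenstein is part of the hypothesis. The ruling projection of $S_{0,g}$ onto its base $\mathbb{P}^1$, restricted to $C$, coincides with the morphism $t\mapsto(1:t^2)$, which is of degree $2$; hence ${\rm gon}(C)\leq 2$, with equality since $g\geq 1$ precludes $C\cong\mathbb{P}^1$.

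The implication (v)$\Rightarrow$(ii) is the main obstacle. Given any $g^1_2=\mathcal{L}(\mathcal{F},V)$ on $C$, I would analyse its base locus. A base point at a smooth point is automatically removable; removing it yields a $g^1_1$ and forces $C\cong\mathbb{P}^1$, contradicting $g\geq 1$. A non-removable base point at $P$ would, by \cite[Thms.~3.4, 5.10]{KM}, identify $C$ with a rational nearly normal curve, a structure that I would aim to show is incompatible with the Gorenstein hypothesis when $g\geq 1$. Ruling out non-removable base points under this Gorenstein assumption is where I expect the subtlest work to be required. Granted this, every $g^1_2$ on $C$ is base-point-free, proving (ii) and closing the cycle.
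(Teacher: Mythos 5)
Your handling of (i)$\Leftrightarrow$(ii)$\Leftrightarrow$(iii) coincides with the paper's argument and is fine. The problems are in the remaining cycle. In (iii)$\Rightarrow$(iv) you assert that $1,t^2,t^4,\dots,t^{2g},t^{2g+1}$ all lie in $\op$, but condition (iii) only gives you $u:=t^2/h\in\op$ with $h(0)\neq 0$; since $1\notin\sss$, a nonconstant $h$ is not a unit times an element of $\op$, and $t^2=uh$ need not belong to $\op$. The paper's own remark immediately after the theorem supplies a counterexample: for $g=2$ one has $\op=\C\oplus\C(t^2+at^3)\oplus t^4\obp$, so $t^2/(1-at)\in\op$ while $t^2\notin\op$ whenever $a\neq 0$. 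Your construction is salvageable by using $u$ and an element $z\in\op$ of valuation $2g+1$ in place of $t^2$ and $t^{2g+1}$ (this is precisely the pattern the paper uses for the bielliptic analogue in Theorem~\ref{biell_curves}), but as written the map $\varphi$ is not defined by regular functions on $C$. The paper sidesteps all of this by quoting \cite[Thm. 3.4]{KM} for (i)$\Leftrightarrow$(iv).

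The more serious issue is (v)$\Rightarrow$(ii), which you correctly identify as the crux and then leave open. The whole content of the equivalence (ii)$\Leftrightarrow$(v) is that a $g^1_2$ with a non-removable base point cannot occur on a Gorenstein curve (of positive genus), and conversely that a base-point-free $g^1_2$ forces $C$ Gorenstein; ``I would aim to show'' is not a proof of either. The paper closes both directions by citation: \cite[Cor. 2.3]{Mt} shows that a $g^1_2$ with a non-removable base point forces $C$ non-Gorenstein, and \cite[Prp. 2.6.(2)]{KM} shows that a hyperelliptic integral curve is Gorenstein. Without these inputs (or proofs of them) your cycle does not close. Finally, your (iv)$\Rightarrow$(v) via the ruling projection is reasonable in spirit, but it is argued only for the particular model you constructed, whereas (iv) asserts only that $C$ is isomorphic to \emph{some} degree-$(2g+1)$ curve on the cone; a small additional argument (e.g. computing the degree of the projection from the vertex) is needed there.
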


\begin{proof}
The equivalence $(i) \iff (ii)$ follows immediately from Lemma ~\ref{lemmsl}.
%with $k=2$ by the very definition of a curve being hyperelliptic.

We now prove $(ii) \iff (iii)$. From Lemma~\ref{lemgk1}, $C$ carries a base-point-free $g_2^1$ if and only if there exist polynomials $f, h \in \C[t]$ with no common factor for which $f(0)=0$, $f/h \in \op$, and
\[
2=\deg(h)+{\rm max}\{0,\deg(f)-\deg(h)\}.
\] 
In particular, if $C$ carries a base-point-free $g_2^1$, we necessarily have $\deg(f) \leq 2$, $\deg(h)\leq 2$, and moreover $h(0)\neq 0$ as $f(0)=0$ and $f,h$ have no common factor. But since $P$ is singular, $t$ is a local parameter at $\pb$, $f/h \in \op$ and $f(0)=0$, it follows that 
\begin{equation}\label{f/h}
v_{\pb}(f/h)\geq 2.
\end{equation}
On the other hand, the fact that $h(0)\neq 0$ means that $v_{\pb}(h)=0$, so from \eqref{f/h} we deduce that
\[
v_{\pb}(f)\geq 2.
\]
This in turn forces $f=at^2$ for some nonzero constant $a\in\C$, and $(iii)$ follows. Reversing each of the preceding arguments yields $(iii) \implies (ii)$.

The equivalence $(i) \iff (iv)$ may be found in \cite[Thm. 3.4.(a) $\Leftrightarrow$ (b)]{KM}; use the fact that the {\it linearly normal} curves defined there are non-Gorenstein. 

To finish the proof, note that ${\rm gon}(C)=2$ if and only if $C$ carries a $g_2^1$. If the $g_2^1$ is base-point-free, then $C$ is hyperelliptic; and, if so, $C$ is Gorenstein by \cite[Prp. 2.6.(2)]{KM}. On the other hand, if the $g_2^1$ has a base point (necessarily non-removable since $C\not\cong\pum$), then $C$ is non-Gorenstein by \cite[Cor. 2.3]{Mt}. So clearly $(ii) \iff (v)$ and we are done.
\end{proof}

Note that if $C$ is any curve of genus $g=1$, then  $\op=\C\oplus t^2\obp$; in particular, $t^2\in\op$ and so $C$ is hyperelliptic. Similarly, if $g=2$ and $\sss$ is hyperelliptic, then $\op=\C\oplus \C(t^2+at^3)\oplus t^4\obp$ for some $a\in\C$; so $t^2/(1-at)\in\op$, and hence $C$ is hyperelliptic. We next characterize the set of nonhyperelliptic curves of genus $3$ with hyperelliptic semigroups $\sss$. To this end, first recall than any curve with $2\in\sss$ is Gorenstein, since non-Gorenstein points only occur with multiplicity three or more. So whenever $2\in\sss$, the previous theorem establishes that ${\rm gon}(C)=2$ is equivalent to $C$ being hyperelliptic.

\begin{prop}\label{hypgenus3}
Every nonhyperelliptic curve $C$ of genus $3$ with $2\in\sss$ is isomorphic to a plane curve with (inhomogeneous) parametrizing functions $f_i: \mb{P}^1 \ra \mb{C}$, $0 \leq i \leq 2$ of the form
\begin{equation}\label{abcd}
f_0=1-2at+bt^2+ct^3+dt^4, \hspace{10pt} f_1=t^2-at^3, \text{ and } f_2=t^4
\end{equation}
for some $a,b,c,d\in\C$, such that $ac \neq 0$. Moreover, $C$ is trigonal.
%$$
%{\rm gon}(C)=
%\begin{cases}
%3&\text{if}\ \ a=b=0,\ \text{or}\ d=0,\ \text{or}\ f_0(a)=0\\
%\ \begin{cases}&d=0 &\\ \text{or} d\neq 0\ \text{and}\ a=b=0,\ \text{or}\ a\neq 0\ text{and}\ f_0(a)=0 \end{cases}\\ 
%4&\text{otherwise}
%\end{cases}
%$$
\end{prop}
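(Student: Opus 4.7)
The plan is to first pin down the semigroup: $g=3$ together with $2\in\sss$ forces $\sss=\langle 2,7\rangle$ (the unique genus-$3$ hyperelliptic semigroup), which has conductor $c=6=2g$ and is thus symmetric, so $P$ is Gorenstein. Next, I would show that $C$ admits a canonical embedding as a plane quartic. The space $H^0(\ww_C)$ has dimension $g=3$ and consists of meromorphic differentials on $\pum$ with pole orders at $\pb$ in $\{2,4,6\}$, cut out by the residue-pairing conditions against $\op$; from an explicit basis $\omega_1, \omega_2, \omega_3$ one verifies that the canonical map $\phi_K\colon C\to\pp^2$ is an embedding, using non-hyperellipticity (via Theorem~\ref{thmhyp}) to rule out pairs of points with the same image, and the local structure of $\op$ at $P$ to separate tangent directions there.

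With $C\hookrightarrow\pp^2$ a plane quartic and coordinates chosen so that $P=(1:0:0)$, write the normalization $\pum\to\pp^2$ as $t\mapsto(f_0(t):f_1(t):f_2(t))$ with $\deg f_i\leq 4$, $f_0(0)=1$, and $f_1(0)=f_2(0)=0$. Because $\vpb(f_i/f_0)\in\sss\cap\{1,2,3,4\}=\{2,4\}$ for $i=1,2$, after a linear combination of $f_1,f_2$ by a $\pp^2$-automorphism preserving $P$ and a rescaling, I may assume $v_0(f_1)=2$ and $v_0(f_2)=4$ with leading coefficients $1$; since $\deg f_2\leq 4$ this forces $f_2=t^4$. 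The condition $5\notin\sss$ is equivalent to $\vpb(f_0 f_2-f_1^2)\geq 6$, and a direct expansion converts this into the single linear relation $\alpha_1=2\beta_3$ between the coefficient $\alpha_1$ of $t$ in $f_0$ and the coefficient $\beta_3$ of $t^3$ in $f_1$. A further $\pp^2$-automorphism sending $f_1\mapsto f_1+e\cdot f_2$ (with $e$ equal to the $t^4$ coefficient of $f_1$) cancels that term, yielding the form $f_0=1-2at+bt^2+ct^3+dt^4$, $f_1=t^2-at^3$, $f_2=t^4$, with $a:=-\beta_3$.

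To arrange $ac\neq 0$, I would invoke the remaining continuous symmetries, namely the $\pum$-automorphism $t\mapsto\lambda t$ (composed with a diagonal $\pp^2$-rescaling) and the M\"obius transformation $t\mapsto t/(1+\mu t)$ (followed by the $e$-shift needed to restore $f_1=t^2-\tilde a t^3$). A direct calculation shows that under $\mu$ one has $a\mapsto a+2\mu$ and $c\mapsto c-4\mu^3-6a\mu^2-2b\mu$, so a $\mu$ chosen to avoid the finitely many bad values lands $(a,c)$ in the open locus $\{ac\neq 0\}$. Finally, the trigonality of $C$ is immediate from the plane-quartic description: projection from any smooth point $Q\in C$ defines a morphism $C\to\pum$ of degree $4-1=3$, which is a base-point-free $g^1_3$; by Lemma~\ref{lemmsl} this produces the required degree-$3$ morphism $C\to\pum$.

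The main obstacle is the very-ampleness of $\ww_C$: for singular Gorenstein curves this does not follow directly from the smooth-curve theory and must be checked by hand, but the explicit local description of $\op$ afforded by $\sss=\langle 2,7\rangle$ (together with the conductor $t^6\obp$) makes the residue-pairing calculations tractable. Once the plane embedding is established, every subsequent step is routine polynomial manipulation guided by the semigroup and the conductor.
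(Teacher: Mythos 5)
Your proposal follows the paper's skeleton: reduce to $\sss=\langle 2,7\rangle$, embed $C$ canonically as a plane quartic, normalize the parametrization so that $f_2=t^4$ and $f_1=t^2-at^3$, and extract the relation between the linear coefficient of $f_0$ and the cubic coefficient of $f_1$ from $5\notin\sss$; your computation with $f_0f_2-f_1^2$ is literally the paper's computation with $h=((t^2+at^3)/f_0)^2-t^4/f_0$. Two remarks on where you differ. First, the paper does not re-prove very-ampleness of $\ww_C$ by hand: it simply invokes Rosenlicht's theorem on canonical models of nonhyperelliptic Gorenstein curves (via \cite{R} and \cite{KM}), so your ``main obstacle'' is really a citation. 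Second, and more substantively, your treatment of $ac\neq 0$ genuinely diverges from the paper's: the paper only argues that $a=c=0$ is impossible (the parametrization would then factor through $t\mapsto t^2$, contradicting integrality) and then makes a somewhat opaque claim tying the form \eqref{abcd} to nonhyperellipticity, whereas you achieve $ac\neq 0$ by reparametrizing with $t\mapsto t/(1+\mu t)$; your transformation laws $a\mapsto a+2\mu$ and $c\mapsto c-4\mu^3-6a\mu^2-2b\mu$ check out, the latter being a cubic in $\mu$ with nonzero leading coefficient, so all but finitely many $\mu$ land $(a,c)$ in $\{ac\neq 0\}$. This is arguably cleaner than what the paper prints. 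Your trigonality argument (projection from a smooth point of the quartic) coincides with the paper's ``more geometric'' location of $g^1_3$'s. The one genuine omission: you exhibit a base-point-free $g^1_3$ but never exclude ${\rm gon}(C)=2$, i.e.\ a $g^1_2$ with a non-removable base point; the paper closes this off by citing \cite[Cor. 2.3]{Mt} (equivalently, by Theorem~\ref{thmhyp}, a Gorenstein curve of gonality $2$ is hyperelliptic). Since you have already established that $C$ is Gorenstein, this is a one-line fix, but it needs to be said for ``trigonal'' to mean gonality exactly three.
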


\begin{proof}
Let $\mc{L}=\mc{L}(\ww_C,H^0(\ww_C))$ denote the canonical linear series on $C$. Note that $C$ is Gorenstein, as $2\in\sss$. Since $C$ is also non-hyperelliptic by assumption, $\mathcal{L}$ defines an embedding $\varphi: C\hookrightarrow\mathbb{P}^{g-1}=\mathbb{P}^2$ and $C^{\pr}:=\varphi(C)$ is the {\it canonical model} of $C$ first described by Rosenlicht in \cite{R} and investigated more recently by Kleiman and the third author in \cite{KM}. Let $\pi: \cb\to C$ denote the normalization map; then $\varphi\circ\pi:\pum\to \mb{P}^2$ is a parametrization of the canonical model, which we denote (abusively) by $C^{\pr}=(f_0,f_1,f_2)$. 

\medskip
By general theory (see, e.g., \cite[Thm 2.8, Cor. 2.9]{St2}), 
there exists a basis of $H^0(\ww_C)$ whose generators vanish to orders $\{0,2,4\}$ in $P$.  More precisely, $\ww_C$ may be embedded in the constant sheaf of meromorphic functions so that $H^0(\ww_C)=\langle 1,x, y\rangle$ where $x:=f_1/f_0$, $y:=f_2/f_0$ are affine coordinates around the singular point $P=(1:0:0)$. Now, since $\deg\varphi\circ\pi=\deg(\ww_C)=2g-2=4$, after replacing the $f_i$, $0 \leq i \leq 2$ with appropriately-chosen linear combinations we obtain
\[
(f_0,f_1,f_2)=(1+a_1t+a_2t^2+a_3t^3+a_4t^4,t^2+at^3,t^4).
\]
%Note that, besides the assumption that the generating sections have orders $\{0, 2, 4\}$ at $P$, which avoids terms of lower degree in $f_1$ and $f_2$, 
(We could further assume that $f_1$ has no $t^4$-term, but this is unimportant.)
%does not have a term in $t^4$, replacing $x$ by $x-cy$ with a suitable $c\in\mathbb{C}$ if necessary. 

\medskip
We claim that $a_1=2a$. Indeed, let $h:=((t^2+at^3)/f_0)^2-t^4/f_0$. Thus $h \in \op$; on the other hand, we have
\[
h=(t^4+2(a-a_1)t^5+\ldots)-(t^4-a_1t^5+\ldots)=(2a-a_1)t^5+\ldots.
\]
The fact that $5\notin\sss$ now forces $2a-a_1=0$, as desired. 

\medskip
Similarly, the possibility $a_1=a_3=0$ is precluded. Otherwise, $C$ is a nonreduced (double) curve supported along
\[
\pum=(t,t^2,1+a_2t+a_4t^2)
\]
which is precluded, because (by the standing hypotheses of this section) $C$ is necessarily integral. We claim that in fact $a_1=a_3=0$ is the only situation in which $C$ admits a $g_2^1$. %In fact, first note that the parameter $t$ used here fits into the framework of Convention \ref{cnvcnv}. 
Indeed, $\op$ is locally generated by the functions $f_1/f_0$ and $f_2/f_0$. So whenever $C$ admits an inhomogeneous parametrization in $t$ with coefficients $a,b,c,d$ as in \eqref{abcd}, we have $t^2/h\not\in\op$ for any $h\in\C[t]$ with $h(0)\neq 0$ and $\deg(h)\leq 2$; it then follows from Lemma~\ref{thmhyp}.(iii) that $C$ is not hyperelliptic. An alternative path to the same conclusion is by remarking that $C$ is a %curve of genus $3$ and degree $4$ in $\mathbb{P}^2$. So it is a 
canonical curve, and as such cannot be hyperelliptic.

\medskip
We may use Lemma \ref{lemgk1} and Theorem \ref{lemgfb} to deduce that $C$ is trigonal in some cases. For example, if $d=0$, the base-point-free $g_k^1$ induced by $f_1/f_0$ is such that $k \leq 3$ by Lemma~\ref{lemgk1}, so $k=3$ since $C$ is nonhyperelliptic. If $a=0$, then $c\neq 0$; if, moreover, $b=0$ then $C$ carries a $g_3^1$ with a non-removable base point by Theorem \ref{lemgfb}.(i). And if $f_0(a)=0$, then $t^2/(f_0/(t-a))$ yields a base-point-free $g_3^1$ by Lemma~\ref{lemgk1}. 

\medskip
A more geometric way of locating $g_3^1$'s on $C$ is by means of the subseries (of the canonical linear series) given by $\mathcal{L}(\ww_C,\langle x-x_0,y-y_0\rangle)$. Any such $\mc{L}$ corresponds to a pencil of lines passing through $Q=(1:x_0:y_0)$ and cuts out a $g_4^1$ on $C$ (since $C$ is of degree $4$) with a base point at $Q$ %if it belongs to $C$. 
whenever $Q \in C$. If $Q$ is a smooth point of $C$, it is a removable base point of the pencil; we then obtain a $g_3^1$ defined by $\mathcal{L}(\oo_C\langle x-x_0,y-y_0\rangle,\langle x-x_0,y-y_0\rangle)$. %the latter series then computes the gonality of $C$. 
Similarly, we obtain $g^1_3$'s by removing base points of pencils lying along the line at infinity; these are precisely the points $Q_0:=(0:0:1)$ (if $\deg(f_0)\leq 3$) and $Q_r=(0:f_1(r):f_2(r))$ associated to each root $r$ of $f_0(t)$. For example, in the preceding paragraph, the $g_3^1$ obtained when $d=0$ is precisely that obtained by removing $Q_0$ from the $g_4^1$ cut out by those lines passing through it. %Note that $C$ has infinitely many base point free $g_3^1$'s, at least, one for each nonsingular point.

We conclude that $C$ is trigonal by noting that since $C$ is Gorenstein of genus $g\geq 2$, it cannot carry a $g_2^1$ with a non-removable base-point \cite[Cor. 2.3]{Mt}.
\end{proof}

\subsection{The submaximal case}

\medskip \noindent
In the study of Weierstrass points on smooth curves, bielliptic semigroups correspond to {\it bielliptic curves}; that is, curves that are 2-to-1 covers of elliptic curves. Similarly, in our setting of singular rational curves (with a unique singular point), it is natural to call a curve {\it bielliptic} whenever it comes equipped with a degree-2 morphism to an elliptic curve. Note that the elliptic target will necessarily be singular. As Theorem~\ref{biell_curves} below shows,  if the singular point of a an arbitrary bielliptic curve is nonhyperelliptic, then it is necessarily bielliptic. The converse, however, fails.

\begin{thm}\label{biell_curves}
Let  $C$ be a curve of genus $g\geq 5$ such that $P$ is nonhyperelliptic. Then the following hold:
\begin{itemize}
\item[I.] If $C$ is bielliptic, then:
\begin{itemize}
\item[(i)] $P$ is bielliptic;
\item[(ii)] $C$ is isomorphic to a curve of degree $2g+1$ in $\mathbb{P}^{g+1}$ such that: 
\begin{itemize}
\item[(a)] if $C$ is Gorenstein then it lies on a $3$-fold scroll $S_{m,n,1}$; 
\item[(b)] if $C$ is non-Gorenstein then it lies on a 4-fold scroll $S_{m,n,0,0}$
\end{itemize}
 where $m=\lfloor g/2\rfloor$ and $n=\lceil (g-4)/2\rceil$.
\item[(iii)] $C$ is at most tetragonal, and non-Gorenstein if it is trigonal.
\end{itemize}
\item [II.] If $g\geq 10$, then $C$ is bielliptic if and only if it carries a base point free $g_8^3$.
\end{itemize}
\end{thm}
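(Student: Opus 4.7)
The plan is to extract structural consequences from the bielliptic morphism $\phi: C \to E$ in Part I, and then establish the full characterization in Part II, with the converse implication of Part II being the main obstacle.

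For Part I(i), I would first observe that since $C$ has geometric genus zero, so does $E$: otherwise the composite $\pum \to C \to E$ would give a nonconstant map from $\pum$ to a smooth elliptic curve, which is impossible. Hence $E$ is a singular rational curve of arithmetic genus $1$ with a unique unibranch cusp $Q$ of semigroup $\{0,2,3,4,\dots\}$, and $\phi$ lifts to a degree-$2$ cover $\bar\phi : \pum \to \pum$. I would rule out $\phi(P)$ being a smooth point of $E$, since in that case the local uniformizer at $\phi(P)$ would pull back to an element of $\op$ with valuation $1$ or $2$ at $\pb$, contradicting respectively the singularity or the nonhyperellipticity of $P$. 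Therefore $\phi(P)=Q$ with ramification index $2$ at $\pb$, so $\phi^*\mathcal{O}_{E,Q}\subset\op$ contributes $\{0,4,6,8,\dots\}$ to $\sss$; combined with $2\notin\sss$, this forces the minimal generators of $\sss$ to begin with $4$ and $6$, proving $P$ bielliptic.

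For Part I(ii) and (iii), tetragonality is immediate from $\phi^*(g^1_2)$, which is a base-point-free $g^1_4$ on $C$. For the claimed embedding, I would produce a line bundle $\llb$ on $C$ of degree $2g+1$, so that Riemann-Roch for singular curves forces $h^0(\llb) = g+2$ and embeds $C$ into $\pp^{g+1}$; applying Theorem~\ref{lemgfb}(ii) to the pair $(\llb, g^1_4)$ cuts out the claimed scroll, with a parity analysis giving $m = \lfloor g/2 \rfloor$, $n = \lceil (g-4)/2 \rceil$. The discrepancy between the types $S_{m,n,1}$ and $S_{m,n,0,0}$ exactly reflects whether $\omega_C$ is locally free at $P$: the extra degree-$1$ component corresponds to the single canonical generator in the Gorenstein case, while the two extra degree-$0$ components reflect the failure of local freeness at the non-Gorenstein singularity. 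To rule out trigonal Gorenstein bielliptic curves at $g\geq 5$, I would invoke a classical Maroni--Martens style incompatibility: the canonical model of such a curve would carry both a $g^1_3$ and the pulled-back $g^1_4$, forcing the canonical image to lie on a rational normal surface scroll on which both series are imposed by the ruling, which is numerically impossible for $g\geq 5$.

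For Part II, the forward direction is straightforward: any degree-$4$ divisor on $E$ gives a base-point-free $g^3_4$ by Riemann-Roch for arithmetic genus $1$, and pulling back by $\phi$ produces a base-point-free $g^3_8$ on $C$. The converse is the main obstacle. Given a base-point-free $g^3_8$ with $g \geq 10$, I would use it to embed $C \hookrightarrow \pp^3$ as a nondegenerate degree-$8$ curve, and then invoke the Castelnuovo genus bound, which for $d=8$, $r=3$ yields arithmetic genus at most $9$. Since $g \geq 10$, the $g^3_8$ cannot be birationally very ample and must factor through a nontrivial cover onto a curve supporting a complete $g^3_4$; a Riemann-Roch count on the target forces its arithmetic genus to be $1$, and $\pum$ is excluded because the only complete series of degree $4$ on $\pum$ is a $g^4_4$. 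Hence the factorization exhibits $C$ as a $2$-to-$1$ cover of a singular elliptic curve, which is the desired bielliptic structure. The chief subtlety is adapting the Castelnuovo estimate to accommodate the (possibly non-Gorenstein) singular point $P$ of $C$; I would handle this by working on the smooth locus $C \setminus \{P\}$ and controlling the local contribution at $P$ using the unibranch hypothesis and the combinatorial structure of $\sss$.
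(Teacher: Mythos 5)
Your outline for I.(i) and for Part II follows the paper's strategy quite closely (lift $\phi$ to a double cover of $\pum$, rule out $\phi(P)$ smooth using $2\notin\sss$, and, for II, Castelnuovo plus a Riemann--Roch count forcing $(d,g^*)=(2,1)$), but there is a genuine gap in your treatment of I.(iii). To show that a trigonal bielliptic $C$ is non-Gorenstein you invoke a Maroni/Castelnuovo--Severi-style incompatibility between a $g^1_3$ and the pulled-back $g^1_4$ on the canonical model. That argument has no force here: all of the arithmetic genus of $C$ is concentrated at the single unibranch singularity $P$, and both pencils lift to maps $\pum\to\pum$ (of degrees $3$ and $2$), for which there is no numerical obstruction whatsoever. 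Moreover the statement must also cover a $g^1_3$ with a \emph{non-removable} base point, which does not correspond to any degree-$3$ morphism and so is invisible to your argument. The paper instead argues locally at $P$: writing any $g^1_k$ as $\mc{L}(\oo_C\langle 1,f/h\rangle,\langle 1,f/h\rangle)$ and using $k\geq \#(\vpb(\aaa_P)\setminus\sss)+\max\{\deg f,\deg h\}$ together with $m_C(P)=4$ and $4,6\in\sss$, a case analysis on $a=\vpb(f/h)$ shows $k\geq 4$ unless $a=2$ and $2g-1\notin\kk$-dual position, i.e.\ unless $\sss$ is nonsymmetric; hence gonality $3$ forces $C$ non-Gorenstein. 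You need some such local semigroup argument; the global one does not close.

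Two smaller points. In I.(i), having $4,6\in\sss$ and $2\notin\sss$ does not yet make $P$ bielliptic: you must also exclude $3\in\sss$ and $5\in\sss$, which is exactly where the hypothesis $g\geq 5$ enters (if $3\in\sss$ then $g\leq 3$; if $5\in\sss$ then $g\leq 4$). In I.(ii) your plan is under-specified: Theorem~\ref{lemgfb}(ii) only yields the codimension of \emph{some} scroll through $C$, not the types $S_{m,n,1}$ versus $S_{m,n,0,0}$; the paper instead chooses $x,y,z$ (and $u$ in the nonsymmetric case) with prescribed pole orders $4,6,2g-3$ (resp.\ $2g-1$), $2g+1$, exhibits the embedding explicitly via \cite[Lem.~5.1(3)]{KM}, and reads off the scroll from the rank-one condition on the matrix of monomials $x^iy^j$ -- it is this explicit basis, dictated by the two possible shapes of $\sss^*$, that produces the dichotomy between the two scroll types. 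Finally, in Part II you should also verify that the $g^3_8$ is complete (via Clifford, ruling out $h^0=5$) before running the count $4=8/d+1-g^*$; and the Castelnuovo bound applies directly to the integral image curve in $\mb{P}^3$, so no adaptation at $P$ is actually needed.
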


\begin{proof} To prove item I.(i), assume there is a degree-$2$ morphism $\phi: C\to E$, where $E$ is elliptic. Up to isomorphism, $E$ is the projective closure of ${\rm Spec}\,[u^2,u^3]$. Let $\pb$ be the point of $\pum$ which lies over $P$ and let $t$ be the corresponding adapted local parameter. Now $\phi$ lifts to a double cover $\overline{\phi}:\pum\to\pum$ that corresponds to the inclusion $\mathbb{C}(u)\subset\mathbb{C}(t)$. Set $\overline{Q}:=\overline{\phi}(\pb)$, and let $u_{\overline{Q}}$ be the corresponding adapted local parameter. The ramification index of $\overline{\phi}$ at $\pb$ is $e_{\pb}:=\vpb(u_{\overline{Q}})$ and $\overline{\phi}$ is ramified at $\pb$ if $e_{\pb}>1$. The Riemann--Hurwitz formula implies that $\overline{\phi}$ ramifies at two points, both with simple ramification. So $\vpb(u_{\overline{Q}})\leq 2$. 

\medskip
Now assume for the sake of argument that $\phi(P)$ is nonsingular; then, by construction, $u_{\overline{Q}}\in\op$. But $P$ is nonhyperelliptic, hence $2\not\in\sss$; then $\vpb(u_{\overline{Q}})\geq 3$ is forced, which yields a contradiction. So $\phi(P)$ is necessarily the unique singular point of $E$. This means that $u_{\overline{Q}}=u$ and $u^2, u^3\in\op$. Since $2\not\in\sss$, it follows that  $\vpb(u)= 2$, hence $4, 6\in\sss$. If $3\in\sss$, we have $g\leq 3$, while if $5\in\sss$, we have $g\leq 4$; the item follows. 

\medskip
To prove I.(ii), we first apply I.(i), and deduce that $\sss$ is a hyperelliptic semigroup. As we saw in the proof of Theorem~\ref{biell}, this means that
\begin{equation}
\label{equsse}
\sss^*=
\begin{cases}
\{0,4,6,8,\ldots,2g-4,2g-3,2g-2,2g\} & \text{if}\ $\sss$\ \text{is symmetric} \\
\{0,4,6,8,\ldots,2g-2\} & \text{if}\ $\sss$\ \text{is nonsymmetric}
\end{cases}
\end{equation}
Now choose $x,y,z,u\in \op$ such that $\vpb(x)=4$, $\vpb(y)=6$, $\vpb(z)=2g-3$ (resp. $\vpb(z)=2g-1$) if $\sss$ is symmetric (resp., nonsymmetric), and $\vpb(u)=2g+1$.

\medskip
If $C$ is Gorenstein, then $\sss$ is symmetric, and the morphism
$$
\phi:=(1,x,x^2,\ldots,x^m,y,xy,\ldots,x^ny,z,xz):C\longrightarrow\mathbb{P}^{m+n+3}\ \ \ \ \ \ \ \ \ \ \ \ \
$$
is an embedding. Indeed, the morphism $\phi$ is prescribed by the linear series $\mc{L}=\mathcal{L}(\oo_C\langle V\rangle,V)$ where
\[
V=\langle 1,x,x^2,\ldots,x^m;y,xy,\ldots,x^ny;z,xz\rangle.
\]
Set $\aaa:=\oo_C\langle V\rangle$. Now $\deg(\aaa)=\vpb(xz)=2g+1$, so by \cite[Lem. 5.1.(3)]{KM} we deduce that $\aaa$ is very ample and $h^1(\aaa)=0$; it follows that $h^0(\aaa)=g+2$. So $\mathcal{L}$ embeds $C$ in $\mathbb{P}^{g+1}$ as a curve of degree $2g+1$. Note that by construction, we have $m+n+3=g+1$. Moreover, it is easy to check that
%$\deg(\phi(C))=\vpb(xz)=2g+1$ and
\[
{\rm rank} \begin{pmatrix}1  & x & \ldots & x^{m-1} & y & xy & \ldots & x^{n-1}y & z \\
                                    x  & x^2 & \ldots & x^{m} & xy & x^2y & \ldots & x^{n}y & xz\end{pmatrix} <2;
\] 
thus $\phi(C)$ lies on the scroll $S_{m,n,1}\subset\mathbb{P}^{g+1}$.

\medskip
Similarly, if $C$ is non-Gorenstein, then $\sss$ is nonsymmetric, and the morphism
\[
\psi:=(1,x,x^2,\ldots,x^m,y,xy,\ldots,x^ny,z,u):C\longrightarrow\mathbb{P}^{g+1}\ \ \ \ \ \ \ \ \ \ \ \ \
\]
is an embedding. Here $\deg(\psi(C))=\vpb(u)=2g+1$, and
\[
{\rm rank} \begin{pmatrix}1  & x & \ldots & x^{m-1} & y & xy & \ldots & x^{n-1}y \\
                                    x  & x^2 & \ldots & x^{m} & xy & x^2y & \ldots & x^{n}y \end{pmatrix} <2,
\]
which means that $\psi(C)$ lies on the scroll $S_{m,n,0,0}\subset\mathbb{P}^{g+1}$. 

\medskip
To prove I.(iii), since $C$ is bielliptic, composing the maps $C\to E\to\pum$ we get a 4:1 cover of the projective line, so ${\rm gon}(C)\leq 4$. Now setup $C$ as in Convention \ref{cnvcnv} and recall that any $g_k^1$ on $C$ can be computed by a sheaf of the form $\aaa:=\oo_C\langle 1,f/h\rangle$ where $f,h\in\C[t]$ have no common factor, and $\deg(\aaa)=k$. One may extend Proposition \ref{lemgk1} allowing linear series to have base points by means of the formula
$$
k=\#(\vpb(\aaa_P)\setminus\sss)+\deg(h)+{\rm max}\{0,\deg(f)-\deg(h)\}
$$
To see it, note that the first summand above agrees with $\ell(\aaa_P/\op)$. In particular, 
\begin{equation}
\label{equune}
k\geq \#(\vpb(\aaa_P)\setminus\sss)+{\rm max}\{\deg(f),\deg(h)\}
\end{equation}
Set $a:=\vpb(f/h)$ and $A:=\vpb(\aaa_P)\setminus\sss$; we may further assume $a\neq 0$ subtracting constant if necessary; besides, 
\begin{equation}
\label{equdes}
\deg(f)\geq a\ \text{if}\  a>0\ \ \ \ \ \text{and}\ \ \ \ \ \deg(h)\geq a\ \text{if}\ a<0
\end{equation}
Combining (\ref{equune}) and (\ref{equdes}), one sees that to reach gonality $3$ we may assume $|a|\leq 2$ since $m_C(P)=4$. If $|a|=1$, then $\{a,4+a,6+a\}\subset A$ and so $k\geq 4$. If $a=-2$, then $\{-2,2\}\subset A$ and so $k\geq 4$ as well. And if $a=2$ and $C$ is Gorenstein, then $\{2,2g-1\}\in A$ so $k\geq 4$ also. It follows that gonality $3$ can only be computed with $a=2$ and $C$ non-Gorenstein. In this case, if, for instance, $\op$ is monomial, then clearly $\deg(\oo_C\langle 1,t^2\rangle)=3$ and $C$ is trigonal.

\medskip
To prove item II, assume $C$ is bielliptic. Following the proof of item I.(i), let $V$ denote the vector space $\langle 1,u^2,u^3,u^4\rangle\subset k(C)$, and let $\aaa:=\oo_C\langle V\rangle$ denote the sheaf it generates. By construction, the linear series $\mc{L}=\mathcal{L}(\aaa,V)$ is three-dimensional. Because $u^2,u^3\in\op$, the sheaf $\aaa$ is invertible; and because $\aaa$ is globally-generated, the linear series $\mc{L}$ is base-point-free. Now $(1,u):\pum\to\pum$ is a double cover; it follows that $\deg(\aaa)=8$. Indeed, consider the sheaf $\G:=\oo_{\pum}\langle 1,u\rangle$ on $\pum$. Clearly $\deg(\G)=2$, so $\deg(\G^{\otimes\,4})=8$. On the other hand, by construction, $\G^{\otimes\,4}=\pi^*(\aaa)$ where $\pi:\pum\to C$ is the normalization map. Since $\aaa$ is invertible, it follows that $\deg(\aaa)=\deg(\G^{\otimes\,4})=8$.

\medskip
Conversely, assume $C$ carries a base-point-free $g_8^3$. First we claim that it is complete, i.e., that the underlying sheaf $\aaa$ is such that $h^0(\aaa)=4$. To see this, first note that $h^1(\aaa)>0$; indeed, degree considerations and Serre duality show that if $h^1(\aaa)=0$, then $g\leq 5$. Thus, by \cite[App.]{EHKS} or \cite[Lem. 3.1]{KM}, $\aaa$ satisfies the Clifford inequality 
\begin{equation}\label{clifford}
h^0(\aaa)\leq \deg(\aaa)/2+1=5.
\end{equation}
Further, equality holds in \eqref{clifford} in precisely four cases: 
\begin{itemize}
\item [(a)] $\aaa=\oo_C$, which is precluded because $\deg(\aaa)=8\neq 0$; \item [(b)] $\aaa=\ww_C$, which is precluded because $g\neq 5$; 
\item [(c)] $C$ is hyperelliptic, which is precluded by assumption; 
\item [(d)] or $C$ is such that $h^0(\oo/\mathcal{C})=1$.  
\end{itemize}
In the last case, $C$ is \emph{linearly normal} in the sense of \cite{KM}. On the other hand, if $C$ is linearly normal, then $m_P=\mathcal{C}_P$ and if $C$ carries â base point free $g_3^8$, its associated vector space $V$ is of the form $V=\langle 1,x_1,x_2,x_3\rangle$ with each $x_i\in \oo_P$. We may assume, e.g., that $x_1\in\mmp$, but then $\deg(\aaa)\geq v_{\pb}(x_1)\geq g+1$. In particular, if $g\geq 8$, we have $\deg(\aaa)\geq 9$ and so this possibility is precluded as well. Consequently, equality in \eqref{clifford} does not hold, and $\mathcal{L}$ is complete.

\medskip
Now let $\phi: C\to \mathbb{P}^3$ be the morphism induced by the $g_8^3$, and set $C^*:=\phi(C)$.
%Now one may follow the proof of Castelnuovo's bound in \cite[pp. 113--116]{ACGH} to have a similar version for integral curves. So if 
Note that $\phi$ cannot be birational; otherwise, applying Castelnuovo's genus bound \cite[p.116]{ACGH} to $\phi$ yields $g \leq 9$, contrary to assumption.
%$g\leq \frac{3(3-1)}{2}(3-1)+(3\times 1)=9$
%So if $g\geq 10$, then $\phi$ is not birational, so set 
Accordingly, set $d\geq 2$ denote the degree of the finite cover $\phi$. %Consider $k(C^*)\subset k(C)$ and 
Note that the sheaf $\G:=\oo_{C^*}\langle \phi_*(H^0(\aaa))\rangle$ on $C^*$ is such that $\deg(\G)=8/d$. Further, we have $h^0(\G)=4$; indeed, $\G$ may be regarded as a subsheaf of $\aaa$, so $h^0(\G) \leq h^0(\aaa)$; on the other hand, by construction, $H^0(\G)\supset \phi_*(H^0(\aaa))$ so $h^0(\G)\geq \dim(\pi_*(H^0(\aaa)))=4$. Here $h^1(\G)=0$; indeed, if $h^1(\G)>0$, the Clifford inequality would yield $h^0(\G)\leq (\deg(\G)/2)+1=(8/2d)+1\leq 3$, which cannot happen. 

\medskip
From the vanishing of $h^1(\G)$, we deduce that 
\[
h^0(\G)=\deg(\G)+1-g^*
\]
where $g^*$ is the genus of $C^*$. In other words, we have $4=(8/d)+1-g^*$, whose only possible solution is $(d=2,g^*=1)$.
That is, $\phi$ realizes $C$ as a double cover of an elliptic curve. Since $C$ is rational, $C^*$ is too. As such, $C^*$ has just one singularity which is either an ordinary cusp or an ordinary node. %We will prove that the latter cannot happen. Indeed, 
Moreover, from the proof of item I.(i) we see that $\phi$ lifts to a double cover $\overline{\phi}:\pum\to\pum$ which is ramified at $\pb$ and such that $P^*=\phi(\pb)$ is singular. Because $\overline{\phi}$ ramifies at $\pb$, $P^*$ cannot be a node;
%If $P^*$ were a node, there would be a second point $\overline{Q}\in\pum$ with $\overline{\phi}(\overline{Q})=\overline{\phi}(\pb)$, which contradicts the fact that 
we conclude that $C^*=E$.
\end{proof}

To produce examples of nonbielliptic curves $C$ with bielliptic singularities $P$, one possibility is to apply item II of the preceding theorem, and exhibit a $g^3_8$ that is birational (as opposed to a double cover of an elliptic curve). Another possibility is suggested by the following observation.
%first recall from the proof of I.(i) that this amounts to producing a meromorphic function $u$ such that $v_{\pb}(u)=2$ with $u^2,u^3\in\oo_P$ and such that $(1,u):\pum\to\pum$ is a double cover. We summarize in the diagram below
\begin{rem}\label{biell_remark}
Every bielliptic structure $C \ra E$ may be lifted to a double cover $\mb{P}^1 \ra \mb{P}^1$, in such a way that the following diagram commutes:
\[
\hspace{20pt}
\xymatrix{\pum \ar[d]_{\pi_C} \ar[r]^{2:1} & \pum\ar[d]^{\pi_E}.\\
         C\ar[r]^{2:1} & E}
\]
Here $\pi_C$ and $\pi_E$ are the normalizations of $C$ and $E$, respectively.
\end{rem}

\medskip
Consider, then, the following two examples of curves $C$ with locally planar bielliptic singularities $P$ (in each case, $\op$ is the localization of $\C[x,y]$ at the ideal $(x,y)$):
\begin{itemize}
\item Example 1: Let $(x,y)=(t^4,t^6+t^7)$. In this case, the unique rational curve $C$ whose unique singularity is $P$ fails to cover $E$. Indeed, note that the morphism $\phi=(1,t^4,t^6+t^7,t^8)$ associated to the $g^3_8$ of $C$ is clearly birational; it follows from item II of Theorem~\ref{biell_curves} it follows that $C$ is nonbielliptic.
\item Example 2: Let $(x,y)=(u^2,u^3)$, where $u:=\frac{t^2}{1+t^3}$. Let $C$ be the unique rational curve whose unique singularity is $P$, and let $D$ be the rational curve parametrized by $(1+t^3,t^2,t^4)$. Then $C$ covers $E$; indeed, we have the following commutative diagram:
\begin{equation}\label{D_diagram}
\ \ \ \ \ \ \ \ \ \ \ \ \
\xymatrix{\pum \ar[d]_{\pi_D} \ar[r]^{(1,u)} & \pum\ar[dd]^{\pi_E}\\
         D\ar[ur]_{(1,u)}\ar[d]    &  \\
              C \ar[r]^{(1,u^2,u^3)}                       &  E}.
\end{equation}
Proposition~\ref{hypgenus3} implies that $(1,u)$ is a morphism of degree 3, i.e., that $C \ra E$ is a triple cover. We claim that no double cover $C \ra E$ exists. Otherwise, we could write $(x,y)=(v^2,v^3)$ for some $v\in\oo_{D^{\pr},P^{\pr}}$, with $P^{\pr}$ a singular point of some $D^{\pr}$, such that $(1,v): D^{\pr} \ra \pum$ is a double cover as in \eqref{D_diagram}. However, Lemma~\ref{thmhyp} implies that any $v$ for which $(1,v)$ defines a double cover is necessarily of the form $v=t^2/h$ for some $h\in\C[t]$ with $h(0)\neq 0$ and $\deg(h)\leq 2$; we leave it to the reader to check that such a $v$ does not exist.
\end{itemize}
%Each of these two singularities clearly has a bielliptic semigroup.

\end{document}